\documentclass[12pt,reqno]{amsart}

\newcommand\version{September 8, 2019}


\usepackage{amsmath,amsfonts,amsthm,amssymb,amsxtra}
\usepackage{bbm} 



\setlength{\voffset}{-.7truein}
\setlength{\textheight}{8.8truein}
\setlength{\textwidth}{6.05truein}
\setlength{\hoffset}{-.7truein}


\newtheorem{theorem}{Theorem}
\newtheorem{proposition}[theorem]{Proposition}
\newtheorem{lemma}[theorem]{Lemma}

\theoremstyle{definition}

\newtheorem{remark}[theorem]{Remark}
\newtheorem{remarks}[theorem]{Remarks}

\theoremstyle{remark}




\newcommand{\1}{\mathbbm{1}}

\renewcommand{\epsilon}{\varepsilon}

\renewcommand{\phi}{\varphi}
\newcommand{\R}{\mathbb{R}}

\newcommand{\Sph}{\mathbb{S}}

\newcommand{\Z}{\mathbb{Z}}

\DeclareMathOperator{\diam}{diam}

\DeclareMathOperator{\supp}{supp}


\begin{document}

\title[Proof of spherical flocking --- \version]{Proof of spherical flocking \\ based on quantitative rearrangement inequalities}

\author{Rupert L. Frank}
\address[R. L. Frank]{Mathematisches Institut, Ludwig-Maximilans Univers\"at M\"unchen, The\-resienstr. 39, 80333 M\"unchen, Germany, and Munich Center for Quantum Science and Technology (MCQST), Schellingstr. 4, 80799 M\"unchen, Germany, and Mathematics 253-37, Caltech, Pasa\-de\-na, CA 91125, USA}
\email{rlfrank@caltech.edu}

\author{Elliott H. Lieb}
\address[E. H. Lieb]{Departments of Mathematics and Physics, Jadwin Hall, Princeton University, Princeton, NJ 08544, USA}
\email{lieb@princeton.edu}

\begin{abstract}
Our recent work on the Burchard--Choksi--Topaloglu flocking problem showed that in the large mass regime the ground state density profile is the characteristic function of some set. Here we show that this set is, in fact, a round ball. The essential mathematical structure needed in our proof is a strict rearrangement inequality with a quantitative error estimate, which we deduce from recent deep results of M. Christ.
\end{abstract}


\maketitle

\renewcommand{\thefootnote}{${}$} \footnotetext{\copyright\, 2019 by
  the authors. This paper may be reproduced, in its entirety, for
  non-commercial purposes.\\
  Partial support through US National Science Foundation grant DMS-1363432 and through German Research Foundation grant EXC-2111 390814868 (R.L.F.) is acknowledged.}


\section{Introduction and main result} 

We continue our study of the flocking (or swarming) model introduced by Burchard, Choksi and Topaloglu \cite{BCT} to describe the stable states of a large group of animals such as fish or birds. The model is described as follows. There is a function $\rho$ on $\R^3$ or, more generally, on $\R^N$, $N\geq 1$, which quantifies the density of animals and is allowed to take values between zero and one. (The upper bound one is there to prevent the animals from crashing into each other.) The total mass of animals is $m=\int_{\R^3}\rho(x)\,dx$ and is specified as a parameter in the problem.

The density $\rho$ must minimize the following `energy'
\begin{equation}
\label{eq:energy}
\mathcal E_{\alpha,\lambda}[\rho] = \frac12 \iint_{\R^N\times\R^N} \rho(x) \left( |x-y|^\alpha + \frac{1}{|x-y|^\lambda} \right) \rho(y)\,dx\,dy \,.
\end{equation}
Here $\alpha>0$ and $0<\lambda<N$ are parameters and the first term above represents an attractive `force' pulling the animals together and the second term is a repulsive `force' which keeps the animals apart. By using the different length scalings of the two terms in \eqref{eq:energy} we can ignore any need for a coupling constant in front of these terms. The power laws $|x|^\alpha$ and $|x|^{-\lambda}$ in \eqref{eq:energy} are not sacrosanct.

The minimization problem, formally stated, is
$$
E_{\alpha,\lambda}(m) = \inf\left\{ \mathcal E_{\alpha,\lambda}[\rho] :\ 0\leq \rho\leq 1 \,,\ \int_{\R^N} \rho(x)\,dx = m \right\}.
$$
The existence of a minimizer of the problem is shown in \cite{CFT}, but the qualitative features of this minimizer remained an interesting topic of investigation. Here we focus on the regime of large $m$.

Our main result is the following.

\begin{theorem}\label{main}
Let $0<\lambda<N-1$ and $\alpha>0$. Then there is an $m_{N,\alpha,\lambda}<\infty$ such that for all $m>m_{N,\alpha,\lambda}$ the only minimizers for $E_{\alpha,\lambda}(m)$ are characteristic functions of balls (up to sets of measure zero).
\end{theorem}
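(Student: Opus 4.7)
The overall strategy is to combine the authors' prior structural result with a quantitative rearrangement inequality. By the main result of the authors' recent companion work on the Burchard--Choksi--Topaloglu flocking problem, for all sufficiently large $m$ every minimizer has the form $\rho=\1_\Omega$ for some measurable set $\Omega\subset\R^N$ with $|\Omega|=m$. It therefore suffices to rule out any such $\Omega$ that fails to be a ball.

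Write $\mathcal E_{\alpha,\lambda}[\1_\Omega]=\tfrac12 A[\Omega]+\tfrac12 R[\Omega]$ with $A[\Omega]:=\iint_{\Omega\times\Omega}|x-y|^{\alpha}\,dx\,dy$ and $R[\Omega]:=\iint_{\Omega\times\Omega}|x-y|^{-\lambda}\,dx\,dy$, and let $B\subset\R^N$ be an origin-centered ball with $|B|=m$. Riesz rearrangement applied to the symmetric decreasing kernel $|\cdot|^{-\lambda}$ gives $R[\Omega]\leq R[B]$, while the layer-cake identity $|x-y|^{\alpha}=\alpha\int_0^\infty t^{\alpha-1}(1-\1_{\{|x-y|\leq t\}})\,dt$ together with Riesz applied to the symmetric decreasing kernels $\1_{\{|\cdot|\leq t\}}$ gives $A[\Omega]\geq A[B]$. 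Thus rearrangement to the ball is favorable for the attractive term and unfavorable for the repulsive one, and the proof reduces to showing that the first effect strictly dominates once $m$ is large.

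Denote the $L^1$-asymmetry by $\mathcal A(\Omega):=\inf_{x_0\in\R^N}|\Omega\,\Delta\,(B+x_0)|$. From M.~Christ's quantitative strict Riesz rearrangement inequality, applied term-by-term in the layer-cake representation above, I expect to extract a bound of the form
$$A[\Omega]-A[B]\;\geq\; c_1(m)\,\mathcal A(\Omega)^{p}$$
for an explicit exponent $p$. A complementary upper bound
$$R[B]-R[\Omega]\;\leq\; C_1(m)\,\mathcal A(\Omega)$$
should follow by expressing the difference through the Riesz potential $u_B(x):=\int_B|x-y|^{-\lambda}\,dy$ (globally bounded because $\lambda<N$) and a surface-type estimate along $\partial B$; it is in the latter step that the hypothesis $\lambda<N-1$ is used, to ensure integrability of $|x-y|^{-\lambda}$ against the surface measure on $\partial B$.

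Combining these with the minimality inequality $A[\Omega]-A[B]\leq R[B]-R[\Omega]$ yields $\mathcal A(\Omega)^{p-1}\leq C_1(m)/c_1(m)$. Under the natural rescaling $\Omega=m^{1/N}\Omega_1$ one finds $c_1(m)$ of order $m^{2+\alpha/N-p}$ and $C_1(m)$ of order $m^{1-\lambda/N}$, so the ratio $C_1/c_1$ is a negative power of $m$ (provided the exponent $p$ from Christ's inequality is not too large), which forces $\mathcal A(\Omega)\to 0$ as $m\to\infty$. The main obstacle I anticipate is extracting Christ's stability in a form sharp enough in $\mathcal A$ to pin $\mathcal A(\Omega)$ to $0$ exactly, not merely to a small quantity; the likely remedy is a two-step scheme in which a soft preliminary bound first brings any minimizer $L^1$-close to a ball once $m$ is large, after which Christ's sharp local inequality (possibly supplemented by a Fuglede-type linearization around $B$) forces $\Omega$ to coincide with $B$ up to translation and null sets.
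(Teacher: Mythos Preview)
Your first stage is essentially the paper's Step~1: Christ's theorem yields the sharp quadratic attractive gain (so $p=2$), namely $A[\Omega]-A[B]\geq c\,m^{2+\alpha/N}\mathcal A(\Omega)^2$, and against your linear repulsive bound $R[B]-R[\Omega]\leq C\,m^{1-\lambda/N}\mathcal A(\Omega)$ this gives only $\mathcal A(\Omega)\lesssim m^{-(\alpha+\lambda)/N}$, not $\mathcal A(\Omega)=0$. You correctly identify this as the obstacle, but your proposed remedy has a genuine gap. To close the argument one must bound the repulsive loss \emph{quadratically} in $\mathcal A(\Omega)$, and the paper stresses that such a bound is \emph{false} under $L^1$-closeness alone: transporting a small mass from near $\partial B$ to a point far away changes $R$ linearly in that mass. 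A quadratic bound holds only once the symmetric difference $\Omega\,\Delta\,B$ is confined to a shell of width $O(\mathcal A(\Omega)\cdot R)$ about $\partial B$, i.e.\ under Hausdorff-type closeness with the correct constant. Your suggested Fuglede-type linearization presupposes exactly this (a graph representation over $\partial B$), so it cannot be reached from $L^1$-closeness alone; ``Christ's sharp local inequality'' is on the attractive side and does not help here either.

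What is missing is the upgrade from $L^1$- to Hausdorff-closeness, which is the technical heart of the paper (their Step~2): every large-$m$ minimizer satisfies $\1_{(1-C\mathcal A)B}\leq\rho\leq\1_{(1+C\mathcal A)B}$ for some ball $B$ of the correct measure. This is obtained not by regularity theory but by an iterated competitor construction on dyadic shells of width $2^{-n}R$, using the strict monotonicity $|\Phi(r)-\Phi(R)|\gtrsim R^{N+\alpha-1}\min\{|r-R|,R\}$ of the ball's own potential (this, and not a surface estimate, is where $\lambda<N-1$ actually enters, via boundedness of $\phi'$ at $r=R$) together with an improved Riesz-potential estimate for functions supported in thin annuli. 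Only after this Hausdorff control is established does the quadratic repulsive bound (the paper's Proposition~6) apply and force $\mathcal A(\Omega)=0$.
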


\begin{remarks}
(1) Our proof gives (in principle) a computable value of the constant $m_{N,\alpha,\lambda}$. In particular, we do not use compactness in our proof. However, the value is probably very far from being optimal.\\
(2) We do \emph{not} claim that when characteristic functions are minimizers, they are necessarily balls. It is conceivable that there is an intermediate range of $m$ for which minimizers are characteristic functions of sets which are not balls. This possibility, however, does not occur in the special case $\alpha=2$ and $\lambda=N-2$ in $N\geq 3$ \cite{BCT}.\\
(3) The assumption $\lambda<N-1$ is necessary, since one can show that if $N-1\leq\lambda<N$, then, although minimizers exist, balls can never be critical points of the minimization problem; see Remark \ref{failure}. Our experience with the ball suggests that for $N-1\leq\lambda<N$ minimizers are never characteristic functions.\\
(4) The theorem, in the special case $\alpha=2$, is due to Burchard, Choksi and Topaloglu \cite{BCT}. (There it is said to be valid for $N-1\leq\lambda<N$ as well, but this is incorrect as shown in Remark \ref{failure}.) The case $\alpha=2$, however, is rather special, since, writing $|x-y|^2= |x|^2-2x\cdot y+|y|^2$ and imposing, without loss of generality, the constraint of vanishing center of mass, we see that $\rho\mapsto\mathcal E_{2,\lambda}[\rho]$ is strictly convex. Thus, any solution of the Euler--Lagrange equation is necessarily the unique (up to translations) minimizer.\\
(5) The theorem can also be considered as known in the case $2<\alpha\leq 4$. Indeed, this follows from a remarkable convexity result of Lopes \cite{Lo} and the same method of proof as sketched in the previous remark.\\
(6) We have stated and proved the theorem for an interaction kernel of the form $|x|^\alpha +|x|^{-\lambda}$. However, as will be clear from our proof, the result holds for much more general kernels $k(|x|)$. The crucial assumptions are that $k(r)$ is increasing for large $r$ and tends to $+\infty$ as $r\to+\infty$, as well as that $k*\1_{E^*}$, where $E^*$ is the ball of volume $m$, is differentiable on the surface of $E^*$. (This is where the assumption $\lambda<N-1$ comes from.) However, we stick to the above model case to explain our ideas as clearly as possible.
\end{remarks}

Besides the cases mentioned in the above remarks, our Theorem \ref{main} is new. Also, since one cannot hope to use convexity outside the range $2\leq\alpha\leq 4$, a different method of proof than in these special cases is needed.

The tools we are using are quantitative rearrangement inequalities. To motivate those, let us note that the attractive and repulsive terms in the energy functional compete with each other. A simple scaling argument shows that in the large mass regime the attractive term is the dominant one. Moreover, notice that among all functions $0\leq\rho\leq 1$ with a given integral, the quantity
$$
\frac12 \iint_{\R^N\times\R^N} \rho(x)|x-y|^\alpha \rho(y)\,dx\,dy
$$
is minimal if and only if $\rho$ is the characteristic function of a ball. This follows from the Riesz rearrangement inequality (see \cite{Ri} and also \cite[Thm.~3.7]{LiLo}) and the bathtub principle (see \cite[Thm.~1.14]{LiLo}). One key ingredient in our proof is a lower bound on the energy gain when passing from $\rho$ to a characteristic function of a ball. As we will explain momentarily it is crucial that this lower bound is \emph{quadratic} in a certain distance of $\rho$ from balls. This quadratic rearrangement inequality is stated below as Theorem \ref{christcor2}. It is a rather straightforward consequence of a recent deep result by Christ \cite{Ch} concerning quadratic remainders in the Riesz rearrangement inequality.

Given the quadratic gain in the attractive term, our main work here concerns showing that there is at most a quadratic loss in the repulsive term. The large mass assumption corresponds by scaling to a small coupling constant in front of the repulsive term. Therefore, as soon as we have shown that the gain in the attractive term is at least quadratic and the loss in the repulsive term is at most quadratic, we will conclude that the minimizer $\rho$ is, in fact, the characteristic function of a ball when the mass is sufficiently large.

When trying to prove that the loss in the repulsive term is at most quadratic, the sense in which we measure the distance from a ball becomes crucial. For the quadratic gain in the attractive term, the distance is measured in $L^1$-norm. The difficulty that we face is that for the repulsive term, closeness in $L^1$-norm does \emph{not} guarantee a quadratic loss. The loss could be linear, for instance. One condition that does guarantee a quadratic bound on the loss is closeness in Hausdorff distance; see Proposition \ref{christcor1rev}.

Thus, our main work consists in showing that minimizers corresponding to $E_{\alpha,\lambda}(m)$ are close to balls in Hausdorff distance when $m$ is large. More precisely, we need to show that the set where the minimizer differs from being the characteristic function of a ball is confined to a shell around the surface of this ball and the relative width of this shell is comparable the relative $L^1$-distance from balls. This is achieved in Step 2 of the proof of Theorem \ref{main}, which is the technical heart of this paper. For the proof of this statement we use some ideas of Christ's proof of the quantitative Riesz rearrangement inequality. But due to the unboundedness of the repulsive potential $|x|^{-\lambda}$, we need to iterate his procedure on a large number of dyadic scales while carefully tracking the gains and losses on each scale.

\medskip

There is a certain similarity between the problem we treat here and a class of problems of the form
\begin{equation}
\label{eq:liquiddrop}
\mathrm{Per}_s(E) + \frac12 \iint_{E\times E} \frac{dx\,dy}{|x-y|^\lambda}
\end{equation}
which have recently received a lot of attention in the literature. Here $\mathrm{Per}_s(E)$ denotes the perimeter in the sense of De Giorgi for $s=1$ and its fractional generalization for $0<s<1$. The problem consists in minimizing the above functional over sets of a given measure $|E|=m$. In particular, for $s=1$, $\lambda=1$ and $N=3$ this is Gamow's famous liquid drop model for an atomic nucleus; see, e.g., \cite{FrLi,ChMuTo} and references therein. 

Due to the different scaling, the term $\mathrm{Per}_s(E)$ in \eqref{eq:liquiddrop} is now dominant for \emph{small} $m>0$. By the isoperimetric inequality and its fractional counterpart (see, e.g., \cite{FrSe}, based on ideas in \cite{AlLi}) among all sets $E$ of given measure, the term $\mathrm{Per}_s(E)$ is minimized precisely when $E$ is a ball (up to sets of measure zero). Using quantitative isoperimetric inequalities, it was shown in \cite{KnMu1,KnMu2,BoCh,Ju,FiFuMaMiMo} that for all sufficiently small masses, the only minimizers of \eqref{eq:liquiddrop} are balls (up to sets of measure zero). Thus, our result is an analogue of this result for the minimization problem $E_{\alpha,\lambda}(m)$.

Given the similarities in the problem and in the conclusion, there are, of course, also similarities between our method of proof and those applied in the context of \eqref{eq:liquiddrop}, but there are also important differences. As for similarities, for instance, the quantitative rearrangement inequality in Theorem \ref{christcor2} plays the role of the quantitative isoperimetric inequality. (As an aside we mention that Theorem \ref{christ} implies the quantitative isoperimetric inequality in the fractional case, see Proposition \ref{isoper}.) The proofs in \cite{KnMu1,KnMu2,BoCh,Ju,FiFuMaMiMo}, which show that small mass minimizers of \eqref{eq:liquiddrop} are balls, all rely to some degree on the well-developed regularity theory for almost minimizers of the (fractional) perimeter functional. Such theory is not available in our context and our proof of closeness in Hausdorff sense can be viewed as an initial step in this direction. We hope that it will turn out to be useful in other problems with non-local functionals as well.

\medskip

The remainder of this paper consists of two sections. The next one is devoted to quadratic estimates for single power kernels. We state our version of Christ's theorem (Theorem \ref{christ}) and deduce our lower bound on the gain in the attractive term (Theorem~\ref{christcor2}, as well as our upper bound on the loss in the repulsive term (Proposition~\ref{christcor1rev}). The following section contains the proof of Theorem \ref{main}, which we divide into a preparatory step and three main steps. The heart of the matter is Step 2, which proves closeness in Hausdorff sense.

\medskip

In the following we will frequently abbreviate
\begin{equation}
\label{eq:notationimu}
\mathcal I_\mu[\rho,\sigma] = \frac12 \iint_{\R^N\times\R^N} \rho(x) |x-y|^\mu \sigma(y)\,dx\,dy
\end{equation}
and $\mathcal I_\mu[\rho]=\mathcal I_\mu[\rho,\rho]$.

\subsection*{Acknowledgements}

The authors are grateful to A. Burchard and M. Christ for helpful remarks.


\section{Quadratic estimates}

\subsection{Christ's theorem and its consequences}

For a function $0\not\equiv\rho\in L^1(\R^N)$ with $0\leq\rho\leq 1$, we set
$$
A[\rho] := \left( 2\|\rho\|_1 \right)^{-1} \inf_{a\in\R^N} \| \rho-\1_{E^*+a} \|_1 \,,
$$
where $E^*$ is the ball, centered at the origin, of measure $|E^*|=\int_{\R^N} \rho\,dx$. One of the key tools in this paper is the following theorem.

\begin{theorem}\label{christ}
Let $0<\delta\leq 1/2$. Then there is a constant $c_{N,\delta}>0$ such that for all balls $B\subset\R^N$, centered at the origin, and all $\rho\in L^1(\R^N)$ with $0\leq\rho\leq 1$ and
$$
\delta \leq \frac{|B|^{1/N}}{2\, \|\rho\|_1^{1/N}} \leq 1-\delta \,,
$$
one has
$$
\frac12 \iint_{\R^N\times\R^N} \rho(x)\1_B(x-y)\rho(y)\,dx\,dy \leq \frac12 \iint_{E^*\times E^*} \1_B(x-y)\,dx\,dy - c_{N,\delta} \|\rho\|_1^2 A[\rho]^2 \,,
$$
where $E^*$ is the ball, centered at the origin, of measure $|E^*|=\int_{\R^N} \rho\,dx$.
\end{theorem}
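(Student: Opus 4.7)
My plan is to derive Theorem~\ref{christ} from Christ's deep theorem \cite{Ch} (a quantitative Riesz rearrangement inequality) in two steps that are then combined by the triangle inequality.

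First, I would apply Christ's theorem to the triple $(\rho,\1_B,\rho)$. Since $\1_B$ is already radially symmetric decreasing it contributes no asymmetry, and the hypothesis $\delta \le |B|^{1/N}/(2\|\rho\|_1^{1/N}) \le 1-\delta$ is precisely the strict admissibility condition needed by Christ's theorem. This yields
\begin{equation*}
\iint_{\R^N\times\R^N} \rho(x)\1_B(x-y)\rho(y)\,dx\,dy \le \iint_{\R^N\times\R^N} \rho^*(x)\1_B(x-y)\rho^*(y)\,dx\,dy - c_1 \inf_{a\in\R^N} \|\rho - \rho^*(\cdot - a)\|_1^2
\end{equation*}
with $c_1 = c_1(N,\delta) > 0$, where $\rho^*$ is the symmetric decreasing rearrangement of $\rho$. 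If Christ's statement is given only for characteristic functions of sets, the functional version follows by a layer-cake decomposition $\rho = \int_0^1 \1_{\{\rho>t\}}\,dt$, applying the set inequality to each pair of superlevel sets and integrating, while the range of levels where admissibility fails is discarded via a separate crude bound that does not ruin the estimate.

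Second, I would establish a quantitative bathtub-type refinement passing from $\rho^*$ to $\1_{E^*}$. For any radially symmetric decreasing $\rho^*$ with $0 \le \rho^* \le 1$ and $\int\rho^* = |E^*|$, I claim
\begin{equation*}
\iint \rho^*(x)\1_B(x-y)\rho^*(y)\,dx\,dy \le \iint_{E^*\times E^*} \1_B(x-y)\,dx\,dy - c_2 \|\rho^* - \1_{E^*}\|_1^2.
\end{equation*}
Setting $g = \rho^* - \1_{E^*}$ (which satisfies $g\le 0$ on $E^*$, $g\ge 0$ off $E^*$, $\int g=0$ because $\rho^*\le 1$ and $\int\rho^* = |E^*|$), the deficit becomes $-2\int gG - \iint g(x)\1_B(x-y)g(y)\,dx\,dy$ with $G = \1_B * \1_{E^*}$ radial decreasing. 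Parameterize $\rho^*$ by its radial level function $R(t) = (|\{\rho^* > t\}|/\omega_N)^{1/N}$; the mass constraint reads $\int_0^1 R(t)^N\,dt = R^{*N}$, and this constraint cancels the leading linear contributions to $-\int gG$ and leaves a term of size $|G'(R^*)|\int_0^1 (R(t)-R^*)^2\,dt$. Since $\|g\|_1 = \int_0^1 \omega_N|R(t)^N - R^{*N}|\,dt$, Cauchy--Schwarz converts this into a lower bound of the form $c\,|G'(R^*)|\cdot\|g\|_1^2$, and the geometric estimate $|G'(R^*)| \ge c(\delta) > 0$ (which is where the admissibility hypothesis re-enters) completes the step.

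Having both inequalities, combine via translation invariance of the interaction and the triangle inequality $\|\rho - \1_{E^*+a}\|_1 \le \|\rho - \rho^*(\cdot - a)\|_1 + \|\rho^* - \1_{E^*}\|_1$, followed by $(x+y)^2 \le 2(x^2+y^2)$:
\begin{equation*}
(2\|\rho\|_1 A[\rho])^2 = \inf_a \|\rho - \1_{E^*+a}\|_1^2 \le 2\inf_a \|\rho - \rho^*(\cdot-a)\|_1^2 + 2\|\rho^* - \1_{E^*}\|_1^2.
\end{equation*}
Summing the two deficits then yields Theorem~\ref{christ} with $c_{N,\delta} = \min(c_1,c_2)/2$. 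The main obstacle is the bathtub step: the indefinite quadratic remainder $\iint g(x)\1_B(x-y)g(y)\,dx\,dy$ is only crudely bounded by $\|g\|_1^2$ via Young's inequality, which in principle could overwhelm the bathtub contribution. The cancellation coming from the radial layer-cake representation, together with the geometric lower bound $|G'(R^*)| \ge c(\delta) > 0$, is precisely what allows the quadratic remainder to be absorbed and closes the estimate.
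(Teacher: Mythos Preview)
Your two-step factorization through $\rho^*$ is natural, but step~1 has a real gap. Christ's quantitative Riesz inequality in \cite{Ch} is stated for triples of \emph{sets}, and the layer-cake reduction you sketch does not deliver the remainder $\inf_a\|\rho-\rho^*(\cdot-a)\|_1^2$. Writing $\rho=\int_0^1\1_{E_t}\,dt$ and applying Christ's bound to each triple $(E_s,B,E_t)$ yields, for every pair of levels, a remainder involving an infimum over translations (and an $SL(N)$ matrix) taken \emph{separately} for that $(s,t)$; the optimal centers may drift with the level. Since $\inf_a\int_0^1|E_t\triangle(E_t^*+a)|\,dt\ge\int_0^1\inf_{a_t}|E_t\triangle(E_t^*+a_t)|\,dt$ with strict inequality in general, integrating the per-level remainders loses precisely the single global infimum you need. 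There is also the admissibility issue you flag: for $t$ near $0$ or $1$ the level sets $E_t$ can have arbitrary measure, so Christ's hypothesis fails on a set of levels that may carry a nontrivial share of $\|\rho-\rho^*\|_1$. This center-alignment obstruction is not a technicality---it is exactly why the paper says the extension to $0\le\rho\le1$ is ``obtained by essentially the same method of proof'' and defers it to the supplementary note \cite{FrLi2}: one must rerun Christ's entire three-stage argument (reduction to small $L^1$-asymmetry, passage from $L^1$- to Hausdorff-closeness, Hessian analysis at the optimizer) directly in the functional setting rather than cite the set theorem as a black box. Taken as a quotable statement, your step~1 is essentially as deep as Theorem~\ref{christ} itself.

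Step~2 is in the spirit of the Hessian stage of Christ's strategy and is plausible for $\rho^*$ near $\1_{E^*}$, but it is also not closed. The quadratic form $\iint g(x)\1_B(x-y)g(y)\,dx\,dy$ is genuinely indefinite (the Fourier transform of $\1_B$ changes sign), and your sketch does not establish that the linear gain of order $|G'(R^*)|\int_0^1(R(t)-R^*)^2\,dt$ dominates it with a constant depending only on $N$ and $\delta$; nor do you say how the argument is completed when $\|\rho^*-\1_{E^*}\|_1$ is not small.
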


In the case where $\rho$ is the characteristic function of a set $E$, this theorem is a special case of a more general result of Christ \cite{Ch} which concerns three possibly different sets. Let us explain this in more detail. In the case of three arbitrary sets the remainder involves two translation parameters and a matrix with determinant one. However, using the triangle inequality and the fact that in our case two of the sets coincide and the third one is a ball, one can bound this remainder term in terms of $A[\1_E]$, which involves only a single translation parameter. In this way, Theorem \ref{christ} for characteristic functions follows from the result in \cite{Ch}.

Our theorem for $\rho$ with values between zero and one is a modest extension of this theorem for characteristic functions which is obtained by essentially the same method of proof. A related extension appears in \cite{ChIl}, again in a three function setting. However, the inequality in \cite{ChIl} is of a somewhat different nature since objects defined on a compact Abelian group are compared with their rearrangements in $\R/\Z$.

We provide details of the proof of Theorem \ref{christ} in a supplementary note \cite{FrLi2} and do not claim any conceptual novelty compared with \cite{Ch}. We prepared this supplementary note for three reasons. First, the proof of Theorem \ref{christ}, as stated, does not appear in the literature. Second, we want to prove our claim that the constant $c_{N,\delta}$ is (in principle) computable and that no compactness is used. And third, we provide a somewhat different and more explicit treatment of the quadratic form which, in some sense, corresponds to the Hessian of the functional under consideration.

The overall strategy of Christ's proof in \cite{Ch} and our version of it bears some resemblance with the proof of a quantitative stability theorem for the Sobolev inequality by Bianchi and Egnell \cite{BiEg}, answering a question in \cite{BrLi}; see also \cite{ChFrWe}. This strategy consists of a first step which reduces the assertion to elements close to the set of optimizers and a second step where the inequality is proved close to the set of optimizers by a detailed analysis of the eigenvalues of the Hessian of the corresponding variational problem. Christ's analysis in \cite{Ch} is significantly more involved than this standard strategy and adds an additional step, since the metric in which closeness to the optimizers is measured in the first step ($L^1$ distance) and the metric in which the second step can be carried out (Hausdorff distance) are not equivalent.

We also note that quantitative stability theorems for Brascamp--Lieb--Luttinger inequalities \cite{BrLiLu} (see also \cite{Ro} together with \cite[Footnote 1]{ChON}), which generalize Riesz's rearrangment inequality to more than three functions, appear in \cite{Ch1,ChON}.

\medskip

As a consequence of Theorem \ref{christ} one obtains the following stability theorems for power-like kernels. We recall the notation \eqref{eq:notationimu}.

\begin{theorem}\label{christcor1}
Let $0<\lambda<N$. Then there is a $c_{N,\lambda}>0$ such that for all $\rho\in L^1(\R^N)$ with $0\leq\rho\leq 1$,
$$
\mathcal I_{-\lambda}[\rho] \leq \mathcal I_{-\lambda}[\1_{E^*}] - c_{N,\lambda} \|\rho\|_1^{2-\lambda/N} A[\rho]^2 \,,
$$
where $E^*$ is the ball, centered at the origin, of measure $|E^*|=\int_{\R^N} \rho\,dx$.
\end{theorem}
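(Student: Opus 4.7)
The plan is to decompose the Riesz kernel via its layer-cake representation and apply Theorem~\ref{christ} scale-by-scale. Writing
$$
|x-y|^{-\lambda} = \lambda \int_0^\infty r^{-\lambda-1} \1_{B_r}(x-y)\,dr,
$$
Fubini yields $\mathcal I_{-\lambda}[\rho] = \lambda \int_0^\infty r^{-\lambda-1} T_r[\rho]\,dr$ with
$$
T_r[\rho] := \tfrac12 \iint_{\R^N\times\R^N} \rho(x)\1_{B_r}(x-y)\rho(y)\,dx\,dy,
$$
and an identical formula for $\mathcal I_{-\lambda}[\1_{E^*}]$. Thus it suffices to bound the integrand difference $T_r[\1_{E^*}] - T_r[\rho]$ from below pointwise in $r$ and then integrate.

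Fixing $\delta = 1/4$, I would first identify the range of radii for which Theorem~\ref{christ} applies to $B = B_r$. The hypothesis $\delta \leq |B_r|^{1/N}/(2\|\rho\|_1^{1/N}) \leq 1-\delta$ is equivalent to $r \in [r_1, r_2]$ with $r_j = c_j(N,\delta)\|\rho\|_1^{1/N}$, i.e., a scale-invariant range proportional to the radius of $E^*$. On this balanced range Theorem~\ref{christ} gives the uniform gain
$$
T_r[\1_{E^*}] - T_r[\rho] \geq c_{N,\delta}\|\rho\|_1^2 A[\rho]^2.
$$
For $r$ outside $[r_1, r_2]$ the Christ hypothesis fails, but I would still use the crude nonnegative bound $T_r[\1_{E^*}] - T_r[\rho] \geq 0$. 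This is a standard rearrangement fact: Riesz gives $T_r[\rho] \leq T_r[\rho^*]$, and then a one-sided bathtub comparison finishes the job, exploiting that $\rho^* - \1_{E^*}$ is radial and changes sign exactly once at $|x| = R^*$, while $\1_{B_r} * (\1_{E^*} + \rho^*)$ is symmetric decreasing; pairing these via the identity $ab-cd = \tfrac12 (a-c)(b+d) + \tfrac12(a+c)(b-d)$ and the zero-integral property of $\rho^* - \1_{E^*}$ gives the inequality.

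Integrating these two bounds over $r$ would then yield
$$
\mathcal I_{-\lambda}[\1_{E^*}] - \mathcal I_{-\lambda}[\rho] \geq c_{N,\delta}\|\rho\|_1^2 A[\rho]^2 \cdot \lambda \int_{r_1}^{r_2} r^{-\lambda-1}\,dr = c_{N,\delta}\|\rho\|_1^2 A[\rho]^2 \cdot (r_1^{-\lambda} - r_2^{-\lambda}).
$$
Since $r_j \propto \|\rho\|_1^{1/N}$, the factor $r_1^{-\lambda} - r_2^{-\lambda}$ equals a positive dimensional constant $C_{N,\lambda,\delta}$ times $\|\rho\|_1^{-\lambda/N}$, producing precisely the stated exponent $\|\rho\|_1^{2-\lambda/N}$ and the constant $c_{N,\lambda} = c_{N,1/4}\, C_{N,\lambda,1/4}$. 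I do not anticipate a genuine obstacle: all the nontrivial work is packaged inside Theorem~\ref{christ}, and the rest is a transparent scale-by-scale reduction. The only points requiring a small amount of care are the pointwise monotonicity $T_r[\rho] \leq T_r[\1_{E^*}]$ outside the balanced range and the bookkeeping verifying that cutoff radii proportional to $\|\rho\|_1^{1/N}$ produce exactly the scaling $2-\lambda/N$.
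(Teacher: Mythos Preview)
Your proposal is correct and follows essentially the same route as the paper: layer-cake the kernel, apply Theorem~\ref{christ} with $\delta=1/4$ on the balanced range of radii, use the non-quantitative inequality $T_r[\rho]\leq T_r[\1_{E^*}]$ elsewhere, and read off the scaling $\|\rho\|_1^{2-\lambda/N}$ from $r_j\propto\|\rho\|_1^{1/N}$. The only difference is cosmetic: the paper dispatches the nonnegativity step in one phrase (``according to Riesz's theorem''), whereas you spell out the two ingredients --- Riesz to pass to $\rho^*$, then the sign-change/bathtub argument to pass from $\rho^*$ to $\1_{E^*}$ --- which is in fact a helpful clarification since $\rho$ need not be a characteristic function.
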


We do not need this theorem in our paper, but it might be useful elsewhere and is a simple consequence of Theorem \ref{christ}. We note that this result had been proved earlier in \cite{BC} for $N=3$, $\lambda=1$ and for characteristic functions $\rho$. For other $\lambda$ and $N$, the bounds from \cite{BC} involve $A[\rho]$ only with a non-optimal power larger than two.

\begin{proof}
We write
\begin{align*}
& \iint_{E^*\times E^*} \frac{dx\,dy}{|x-y|^\lambda} - \iint_{\R^N\times\R^N} \frac{\rho(x)\,\rho(y)}{|x-y|^\lambda}\,dx\,dy \\
& \quad = \lambda \int_0^\infty \frac{dR}{R^{\lambda+1}} \left( \iint_{E^*\times E^*} \1_{B_R}(x-y)\,dx\,dy -  \iint_{\R^N\times\R^N} \rho(x)\1_{B_R}(x-y)\rho(y)\,dx\,dy \right),
\end{align*}
where $B_R$ denotes the ball, centered at the origin, of radius $R$. Let
$$
I := \left\{ R>0 :\ \frac14 \leq \frac{|B_R|^{1/N}}{2\,\|\rho\|_1^{1/N}}\leq \frac34 \right\} \,.
$$
For $R\not\in I$, we bound the integrand of the $R$-integral from below by zero according to Riesz's theorem, while for $R\in I$ we apply Theorem \ref{christ} with $\delta=1/4$. We obtain
\begin{align*}
& \frac12 \iint_{E^*\times E^*} \frac{dx\,dy}{|x-y|^\lambda} - \frac12 \iint_{\R^N\times\R^N} \frac{\rho(x)\,\rho(y)}{|x-y|^\lambda}\,dx\,dy \geq \lambda\, c_{N,1/4}\, \|\rho\|_1^2\, A[\rho] \int_I \frac{dR}{R^{\lambda+1}} \,.
\end{align*}
A simple computation shows that the integral on the right side is a constant, depending on $N$ and $\lambda$, times $\|\rho\|_1^{-\lambda/N}$. This proves the theorem.
\end{proof}

The following theorem is the analogue for kernels involving positive powers. It plays a key role in our proof of Theorem \ref{main}.

\begin{theorem}\label{christcor2}
Let $\alpha>0$. Then there is a $c_{N,\alpha}>0$ such that for all $\rho\in L^1(\R^N)$ with $0\leq\rho\leq 1$,
$$
\mathcal I_\alpha[\rho] \geq \mathcal I_\alpha[\1_{E^*}] + c_{N,\alpha} \|\rho\|_1^{2+\alpha/N} A[\rho]^2 \,,
$$
where $E^*$ is the ball, centered at the origin, of measure $|E^*|=\int_{\R^N} \rho\,dx$.
\end{theorem}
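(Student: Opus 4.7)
My approach mirrors the proof of Theorem \ref{christcor1} but uses the layer-cake representation for positive powers,
\begin{equation*}
|x-y|^\alpha = \alpha \int_0^\infty R^{\alpha-1}\bigl(1 - \1_{B_R}(x-y)\bigr)\,dR.
\end{equation*}
The theorem is trivially true if $\mathcal I_\alpha[\rho]=\infty$, so I assume finiteness, in which case Fubini's theorem (applied to the nonnegative integrand $\rho(x)\rho(y)\,\alpha R^{\alpha-1}\1_{R<|x-y|}$) gives
\begin{equation*}
\mathcal I_\alpha[\rho] = \frac{\alpha}{2}\int_0^\infty R^{\alpha-1}\left[\|\rho\|_1^2 - \iint_{\R^N\times\R^N}\rho(x)\1_{B_R}(x-y)\rho(y)\,dx\,dy\right]dR,
\end{equation*}
and analogously for $\mathcal I_\alpha[\1_{E^*}]$. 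Since $\int\1_{E^*} = \int\rho = \|\rho\|_1$, the $\|\rho\|_1^2$ contributions cancel upon subtraction, giving
\begin{equation*}
\mathcal I_\alpha[\rho] - \mathcal I_\alpha[\1_{E^*}] = \frac{\alpha}{2}\int_0^\infty R^{\alpha-1}\left[\iint_{E^*\times E^*}\1_{B_R}(x-y)\,dx\,dy - \iint_{\R^N\times\R^N}\rho(x)\1_{B_R}(x-y)\rho(y)\,dx\,dy\right]dR.
\end{equation*}

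Following the proof of Theorem \ref{christcor1}, I split the $R$-integral using
\begin{equation*}
I := \left\{R>0 :\ \tfrac{1}{4}\leq \tfrac{|B_R|^{1/N}}{2\|\rho\|_1^{1/N}}\leq \tfrac{3}{4}\right\}.
\end{equation*}
For $R \in I$, Theorem \ref{christ} with $\delta = 1/4$ bounds the bracketed expression below by $2\,c_{N,1/4}\|\rho\|_1^2 A[\rho]^2$; for $R \notin I$, the Riesz rearrangement inequality yields the bound $\geq 0$. A direct computation gives $\int_I R^{\alpha-1}\,dR = \alpha^{-1}\bigl[(3/2)^\alpha - (1/2)^\alpha\bigr]|B_1|^{-\alpha/N}\|\rho\|_1^{\alpha/N}$, and assembling these bounds produces the asserted inequality with $c_{N,\alpha} = c_{N,1/4}\bigl[(3/2)^\alpha - (1/2)^\alpha\bigr]|B_1|^{-\alpha/N}$.

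I do not foresee substantive obstacles: the argument is structurally identical to the proof of Theorem \ref{christcor1}, with the weight $R^{-\lambda-1}$ (which concentrates near $R=0$) replaced by the weight $R^{\alpha-1}$ (which grows at infinity). Both weights are integrable on the bounded set $I$, and the homogeneity of $R^{\alpha-1}$ combined with $|B_R|=|B_1|R^N$ automatically produces the expected exponent $2+\alpha/N$ on $\|\rho\|_1$ in the final bound.
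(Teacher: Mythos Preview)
Your proof is correct and follows exactly the approach sketched in the paper: the same layer-cake identity for $|x-y|^\alpha$, the same splitting of the $R$-integral into $I$ and its complement, and the same application of Theorem~\ref{christ} on $I$ together with the Riesz inequality elsewhere. You have simply supplied the details the paper omits, including the explicit constant and the Fubini justification.
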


\begin{proof}
The proof is similar to that of Theorem \ref{christcor1}, based on the formula
\begin{align*}
& \iint_{\R^N\times\R^N} \rho(x)|x-y|^\alpha\rho(y)\,dx\,dy - \iint_{E^*\times E^*} |x-y|^\alpha\,dx\,dy \\
& = \alpha \int_0^\infty \frac{dR}{R^{-\alpha+1}} \left( \iint_{E^*\times E^*} \1_{B_R}(x-y)\,dx\,dy -  \iint_{\R^N\times\R^N} \rho(x)\1_{B_R}(x-y)\rho(y)\,dx\,dy \right).
\end{align*}
We omit the details.
\end{proof}

As an aside before continuing with the main theme of our paper, let us show that Theorem \ref{christ}, together with the method from \cite{AlLi} as employed in \cite{FrSe}, yields the fractional isoperimetric inequality in quantitative form.

\begin{proposition}\label{isoper}
Let $N\geq 1$ and $0<s<1$. Then there is a $c_{N,s}>0$ such that for all measurable $E\subset\R^N$ of finite measure,
\begin{align*}
& \iint_{\R^N\times\R^N} \frac{|\1_E(x)-\1_E(y)|}{|x-y|^{N+s}}\,dx\,dy - \iint_{\R^N\times\R^N} \frac{|\1_{E^*}(x)-\1_{E^*}(y)|}{|x-y|^{N+s}}\,dx\,dy \\
& \qquad \geq c_{N,s} |E|^{(N-s)/N} A[\1_E]^2 \,.
\end{align*}
\end{proposition}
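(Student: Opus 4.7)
The plan is to represent the singular kernel $|x-y|^{-N-s}$ via a layer-cake formula, so that the fractional perimeter becomes a weighted superposition of the Riesz-type ball quantities that appear in Theorem~\ref{christ}, and then to apply that theorem pointwise in the layer variable.

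Since $\1_E$ takes only the values $0$ and $1$, one has $|\1_E(x)-\1_E(y)| = \1_E(x)\1_{E^c}(y) + \1_{E^c}(x)\1_E(y)$, so by symmetry
$$\iint_{\R^N\times\R^N}\frac{|\1_E(x)-\1_E(y)|}{|x-y|^{N+s}}\,dx\,dy = 2\iint_{E\times E^c}\frac{dx\,dy}{|x-y|^{N+s}}.$$
The elementary identity $|x-y|^{-N-s} = (N+s)\int_0^\infty R^{-N-s-1}\1_{B_R}(x-y)\,dR$, combined with
$$\iint_{E\times E^c}\1_{B_R}(x-y)\,dx\,dy = |E|\,|B_R| - \mathcal R(E,R),\quad \mathcal R(E,R):=\iint \1_E(x)\1_{B_R}(x-y)\1_E(y)\,dx\,dy,$$
and the fact that $|E|=|E^*|$, shows that the divergent $|E||B_R|$ terms cancel on taking the difference with the analogous expression for $E^*$. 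This yields the clean identity
$$\iint\frac{|\1_E(x)-\1_E(y)|}{|x-y|^{N+s}}\,dx\,dy - \iint\frac{|\1_{E^*}(x)-\1_{E^*}(y)|}{|x-y|^{N+s}}\,dx\,dy = 2(N+s)\int_0^\infty R^{-N-s-1}\bigl[\mathcal R(E^*,R)-\mathcal R(E,R)\bigr]\,dR.$$

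The integrand is pointwise non-negative by Riesz's rearrangement inequality, so I may restrict the $R$-integration to the middle range
$$I := \Bigl\{R>0 : \tfrac14 \leq \tfrac{|B_R|^{1/N}}{2|E|^{1/N}} \leq \tfrac34\Bigr\} = [a_N |E|^{1/N},\, b_N|E|^{1/N}]$$
for explicit constants $0<a_N<b_N$ depending only on $N$. On $I$, Theorem~\ref{christ} applied to $\rho=\1_E$ with $\delta=1/4$ gives
$$\mathcal R(E^*,R)-\mathcal R(E,R) \geq 2c_{N,1/4}\,|E|^2 A[\1_E]^2,$$
while an elementary scaling computation (substituting $R=t|E|^{1/N}$) shows $\int_I R^{-N-s-1}\,dR = C_{N,s}\,|E|^{-(N+s)/N}$. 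Multiplying the two bounds together produces exactly the claimed lower bound $c_{N,s}\,|E|^{2-(N+s)/N} A[\1_E]^2 = c_{N,s}\,|E|^{(N-s)/N}A[\1_E]^2$.

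I do not anticipate any serious obstacle. Beyond Theorem~\ref{christ} the only ingredients are the layer-cake identity and Riesz's rearrangement inequality, and the only mildly delicate point — that the individual integrals $\iint_{E\times E^c}|x-y|^{-N-s}$ and $\iint_{E^*\times {E^*}^c}|x-y|^{-N-s}$ could a priori be infinite — is harmless, because one works throughout with the finite, non-negative difference $\mathcal R(E^*,R)-\mathcal R(E,R)$ at each $R$ before integrating (and if the left-hand side of the proposition is already $+\infty$ the claim is trivial).
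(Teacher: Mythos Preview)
Your proof is correct and follows essentially the same route as the paper's: both use the layer-cake representation to reduce the fractional perimeter gap to an $R$-integral of $\mathcal R(E^*,R)-\mathcal R(E,R)$, discard the integrand outside the middle range $I$ by the Riesz inequality, and apply Theorem~\ref{christ} with $\delta=1/4$ on $I$. Your handling of the divergent pieces (cancelling the $|E||B_R|$ terms in the difference, and noting the inequality is trivial when the left side is $+\infty$) is a slightly more explicit version of the paper's parenthetical remark about first working with an integrable kernel and passing to the limit.
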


This inequality is weaker than \cite[Thm.~1.1]{FiFuMaMiMo} since our constant $c_{N,s}$ remains bounded as $s\to 1$, whereas that in \cite[Thm.~1.1]{FiFuMaMiMo} behaves like $(1-s)^{-1}$. Such a factor allows one to recover the classical isoperimetric inequality in the limit $s\to 1$.

\begin{proof}
As in \cite{AlLi,FrSe} we write
\begin{align*}
& \iint_{\R^N\times\R^N} \frac{|\1_E(x)-\1_E(y)|}{|x-y|^{N+s}}\,dx\,dy - \iint_{\R^N\times\R^N} \frac{|\1_{E^*}(x)-\1_{E^*}(y)|}{|x-y|^{N+s}}\,dx\,dy \\
& \quad = \int_0^\infty dt\, \left( \iint_{E^*\times E^*} \1_{\{ |x-y|^{-N-s}>t \} } \,dx\,dy - \iint_{E\times E} \1_{\{ |x-y|^{-N-s}>t \}} \,dx\,dy \right) \\
& \quad = (N+s) \int_0^\infty \frac{dR}{R^{N+s+1}}\, \left( \iint_{E^*\times E^*} \1_{B_R}(x-y) \,dx\,dy - \iint_{E\times E} \1_{B_R}(x-y) \,dx\,dy \right).
\end{align*}
(This identity is first verified by replacing $|x|^{-N-s}$ by an integrable kernel and then by passing to the limit.) Let $I$ be defined as in the proof of Theorem \ref{christcor1} with $\rho=\1_E$. For $R\not\in I$, we bound the integrand of the $R$-integral from below by zero according to Riesz's theorem, while for $\in I$ we apply Theorem \ref{christ} with $\delta=1/4$. This provides us with the lower bound
$$
2\,c_{N,1/4} (N+s)\, |E|^2\, A[\1_E]^2 \int_{I} \frac{dR}{R^{N+s+1}} = c_{N,s}\, |E|^{(N-s)/N}\, A[\1_E]^2 \,,
$$
as claimed.
\end{proof}


\subsection{Reverse inequalities}

We now prove an inequality which complements that in Theorem \ref{christcor1}. We emphasize that such inequalities can only hold if the set where the function deviates from a ball is confined to a small neighborhood of the surface of the ball.

\begin{proposition}\label{christcor1rev}
Let $0<\lambda<N-1$. Then there is a $C_{N,\lambda}<\infty$ such that for all $\rho\in L^1(\R^N)$ with
$$
\1_{(1-\theta)E^*} \leq \rho \leq \1_{(1+\theta)E^*} 
$$
for some $0\leq\theta\leq 1$, where $E^*$ is the ball of measure $\int_{\R^N} \rho\,dx$, one has
$$
\mathcal I_{-\lambda}[\rho] \geq \mathcal I_{-\lambda}[\1_{E^*}] - C_{N,\lambda} \|\rho\|_1^{2-\lambda/N} \theta^2 \,.
$$
\end{proposition}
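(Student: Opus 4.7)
The plan is to write $f:=\rho-\1_{E^*}$, expand $\mathcal I_{-\lambda}[\rho]$ around the ball, and exploit two ingredients: positive-definiteness of the Riesz kernel (which controls the quadratic-in-$f$ term) and $C^1$-regularity at $\partial E^*$ of the Newton-type potential $V(y):=\int_{E^*}|x-y|^{-\lambda}\,dx$ (available precisely when $\lambda<N-1$), which gives the desired quadratic bound on the cross term.

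By the sandwich hypothesis, $f$ is supported in the annulus $A_\theta:=(1+\theta)E^*\setminus(1-\theta)E^*$, satisfies $|f|\leq 1$ pointwise, and has $\int_{\R^N}f\,dy=0$ since $\|\rho\|_1=|E^*|$. Expanding gives
$$
\mathcal I_{-\lambda}[\rho]-\mathcal I_{-\lambda}[\1_{E^*}] = 2\,\mathcal I_{-\lambda}[\1_{E^*},f]+\mathcal I_{-\lambda}[f].
$$
For $0<\lambda<N$ the Riesz kernel is positive-definite (by Plancherel its Fourier transform is a positive multiple of $|\xi|^{\lambda-N}$), so $\mathcal I_{-\lambda}[f]\geq 0$ and it suffices to lower-bound the cross term. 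The cross term equals $\int V(y)f(y)\,dy$, and by radiality $V$ is constant on $\partial E^*$; write that constant as $V(R_*)$, where $R_*$ denotes the radius of $E^*$. Since $\int f=0$, the cross term equals $\int_{A_\theta}(V(y)-V(R_*))f(y)\,dy$.

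Differentiation under the integral gives $|\nabla V(y)|\leq\lambda\int_{E^*}|x-y|^{-\lambda-1}\,dx$, and the integrand is locally integrable in $x$ exactly when $\lambda+1<N$; a scaling estimate then yields $\|\nabla V\|_{L^\infty(2E^*)}\leq C_{N,\lambda}R_*^{N-\lambda-1}$. For $y\in A_\theta$ one has $\bigl||y|-R_*\bigr|\leq\theta R_*$, hence
$$
|V(y)-V(R_*)|\leq C_{N,\lambda}\,\theta\,R_*^{N-\lambda}.
$$
Combining this with $|f|\leq 1$ and $|A_\theta|=((1+\theta)^N-(1-\theta)^N)|E^*|\leq C_N\,\theta\,R_*^N$ (valid for $\theta\in[0,1]$) yields
$$
\bigl|\,2\,\mathcal I_{-\lambda}[\1_{E^*},f]\,\bigr|\leq C_{N,\lambda}\,\theta^2\,R_*^{2N-\lambda} = C'_{N,\lambda}\,\theta^2\,\|\rho\|_1^{2-\lambda/N},
$$
where we used $R_*^N=c_N\|\rho\|_1$. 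This is the required lower bound.

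The only non-routine point is the $C^1$-regularity of $V$ at $\partial E^*$, which is precisely where the hypothesis $\lambda<N-1$ is consumed. For $\lambda\geq N-1$, $V$ is only H\"older continuous of order $N-\lambda$ on $\partial E^*$ and the analogous estimate degrades to $|V(y)-V(R_*)|\leq C(\theta R_*)^{N-\lambda}$, producing only a sub-quadratic loss; this is consistent with the authors' statement that the threshold $\lambda<N-1$ is essential to the proposition.
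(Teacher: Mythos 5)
Your proposal is correct and follows essentially the same route as the paper: expand $\mathcal I_{-\lambda}[\rho]-\mathcal I_{-\lambda}[\1_{E^*}]$ in $f=\rho-\1_{E^*}$, discard the $\mathcal I_{-\lambda}[f]$ term by positive-definiteness of the Riesz kernel, and control the cross term using the Lipschitz regularity at $\partial E^*$ of the potential of the ball together with the support of $f$ in a shell of width $\theta R_*$. The only cosmetic differences are that the paper establishes the Lipschitz bound (Lemma~\ref{phi}) via a Gauss-theorem reduction to a surface integral rather than your direct differentiation under the integral sign (both hand-wave the differentiability of $V$ at $\partial E^*$ itself to a comparable degree), and the paper additionally records that the cross term has a definite sign by matching the signs of $\rho-\1_{E^*}$ and $\phi-\phi(1)$, whereas you simply bound its absolute value --- which suffices for the lower bound claimed.
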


We call this bound a quadratic estimate, since $\theta$ appears quadratically on the right side. We would like to apply this bound with $\theta = K\,A[\rho]$ for some constant $K\geq A[\rho]^{-1}$ and obtain
$$
\mathcal I_{-\lambda}[\rho] \geq \mathcal I_{-\lambda}[\1_{E^*}] - C_{N,\lambda} K^2 \|\rho\|_1^{2-\lambda/N} A[\rho]^2 \,,
$$
which complements the bound from Theorem \ref{christcor1}.

Related bounds appear in \cite[Lem.~5.3]{FiFuMaMiMo}, but there it is assumed that $\rho$ is the characteristic function of a star-shaped set with $C^1$ boundary. It is not clear whether this regularity assumption is satisfied in our case.

\begin{proof}
By scaling we may assume that $\int_{\R^N} \rho\,dx$ is the measure of the unit ball, and we will write $\mathcal B$ instead of $E^*$. We have, since convolution with $|x|^{-\lambda}$ is positive definite,
\begin{align*}
\mathcal I_{-\lambda}[\rho] - \mathcal I_{-\lambda}[\1_{\mathcal B}] & = \mathcal I_{-\lambda}[\rho-\1_{\mathcal B},\rho+\1_{\mathcal B}] \\
& = \mathcal I_{-\lambda}[\rho-\1_{\mathcal B},\rho+\1_{\mathcal B}] \\
& = 2 \, \mathcal I_{-\lambda}[\rho-\1_{\mathcal B},\1_{\mathcal B}] + \mathcal I_{-\lambda}[\rho-\1_{\mathcal B}] \\
& \geq 2 \, \mathcal I_{-\lambda}[\rho-\1_{\mathcal B},\1_{\mathcal B}] \,.
\end{align*}
Let
\begin{equation}
\label{eq:phi}
\phi(|x|) := \int_{\mathcal B} \frac{dy}{|x-y|^\lambda} \,.
\end{equation}
This notation is justified since the right side is a radial function. We now use the facts that $\rho-\1_{B}$ has integral zero, that $0\leq\rho\leq 1$ and that $\phi$ is non-increasing to write
$$
\mathcal I_{-\lambda}[\rho-\1_{\mathcal B},\1_{\mathcal B}] = \int_{\R^N} (\rho - \1_{\mathcal B}) (\phi-\phi(1))\,dx 
= - \int_{\R^N} \left|\rho-\1_{\mathcal B}\right| \left|\phi- \phi(1)\right|dx \,. 
$$
In Lemma \ref{phi} below we will show that
$$
|\phi(r) - \phi(1)| \leq C_{N,\lambda} |r-1|
\qquad\text{for all}\ r\geq 0 \,,
$$
with a constant $C_{N,\lambda}$ depending only on $N$ and $\lambda$. (We note that at this point the assumption $\lambda<N-1$ enters.) By assumption on $\rho$, we have
$$
|\rho-\1_{\mathcal B}| \leq \1_{\{ 1-\theta\leq |x|\leq 1+\theta \}}
$$
and therefore
$$
\mathcal I_{-\lambda}[\rho-\1_{\mathcal B},\1_{\mathcal B}] \geq - C_{N,\lambda} |\Sph^{N-1}| \int_{1-\theta}^{1+\theta} |r-1| r^{N-1}\,dr \geq - C_{N,\lambda}' \theta^2 \,,
$$
as claimed.
\end{proof}


\subsection{Bounds on potentials}

In this subsection we discuss the `$\lambda$-potential' $\phi$ of the unit ball $\mathcal B\subset\R^N$ defined in \eqref{eq:phi}. Similar bounds are stated, for instance, in \cite[Lem.~4.4]{KnMu2}, but since these bounds are important for us, we include some details.

\begin{lemma}\label{phi}
Let $0<\lambda<N-1$. Then $\phi$ is radial, strictly decreasing and its derivative bounded. Moreover, for some constant $C_{N,\lambda}$ depending only on $N$ and $\lambda$,
$$
|\phi(r) - \phi(1)| \leq C_{N,\lambda} |r-1|
\qquad\text{for all}\ r\geq 0 \,.
$$
\end{lemma}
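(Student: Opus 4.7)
The plan is to show that $\phi$ has a uniformly bounded derivative, from which the Lipschitz estimate follows by integration; the assumption $\lambda < N-1$ enters exactly to secure the integrability needed for this bound. Radiality of $\phi$ is immediate from the rotational symmetry of $\mathcal B$, so I treat $\phi$ as a function of $r = |x| \geq 0$ and fix a unit vector $e_1$, writing $\phi(r) = \int_{\mathcal B} |re_1 - y|^{-\lambda}\,dy$.

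The first step is to justify differentiation under the integral sign to obtain
$$
\phi'(r) = -\lambda \int_{\mathcal B} \frac{r - y_1}{|re_1 - y|^{\lambda+2}}\,dy .
$$
Since $|r - y_1| \leq |re_1 - y|$, the integrand is bounded in absolute value by $\lambda |re_1 - y|^{-(\lambda+1)}$. The substitution $z = re_1 - y$ turns integration over $\mathcal B$ into integration over $B(re_1, 1)$, and for $r$ in a bounded range this is contained in a fixed ball around the origin; thus integrability of the dominating function reduces to that of $|z|^{-(\lambda+1)}$ near the origin of $\R^N$, which is precisely where the hypothesis $\lambda + 1 < N$ is used. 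Dominated convergence then justifies the formula for $\phi'$.

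The same substitution yields the pointwise bound $|\phi'(r)| \leq \lambda \int_{B(re_1, 1)} |z|^{-(\lambda+1)}\,dz$. For $r \leq 2$, $B(re_1, 1) \subset B(0, 3)$ and the integral is at most $|\Sph^{N-1}| \int_0^3 s^{N-\lambda-2}\,ds$, finite by $\lambda < N - 1$. For $r > 2$ one has $|z| \geq r - 1 > 1$ throughout $B(re_1, 1)$, hence the integrand is at most $1$ and the integral bounded by $|\mathcal B|$. Combining, $|\phi'(r)| \leq C_{N,\lambda}$ uniformly in $r \geq 0$, and the Lipschitz estimate $|\phi(r) - \phi(1)| \leq C_{N,\lambda}|r - 1|$ follows at once from the fundamental theorem of calculus.

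It remains to verify strict monotonicity, i.e.\ that $\phi'(r) < 0$ for every $r > 0$. Writing $y = (y_1, y')$ and substituting $u = r - y_1$,
$$
\phi'(r) = -\lambda \int_{r-1}^{r+1} u\, S_r(u)\,du, \qquad S_r(u) := |\Sph^{N-2}| \int_0^{\sqrt{1 - (r-u)^2}} \frac{s^{N-2}\,ds}{(u^2 + s^2)^{(\lambda+2)/2}} \geq 0.
$$
For $r \geq 1$ the $u$-range lies in $[0, \infty)$ and negativity of $\phi'(r)$ is immediate. For $0 < r < 1$ I pair each $u \in (0, 1 - r)$ with its negative partner $-u \in (r - 1, 0)$; the combined contribution is $u(S_r(u) - S_r(-u))$, which is positive because $(r+u)^2 > (r-u)^2$ for $r, u > 0$ makes the $s$-range in $S_r(u)$ strictly larger than that in $S_r(-u)$. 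The remaining range $u \in (1 - r, r + 1)$ contributes only unpaired positive terms, so $\phi'(r) < 0$. The integrability check of the second step is the only subtle point; everything else is routine once the derivative formula is in hand.
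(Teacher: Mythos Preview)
Your proof is correct and takes a different route from the paper's. The paper computes $\phi'$ via the divergence theorem, converting the volume integral into a surface integral over $\Sph^{N-1}$ and reducing to the one-dimensional expression $-\phi'(r) = |\Sph^{N-2}| \int_{-1}^1 t(1-t^2)^{(N-3)/2}(r^2-2rt+1)^{-\lambda/2}\,dt$; the hypothesis $\lambda < N-1$ enters as integrability of $(1-t)^{-(\lambda-N+3)/2}$ near $t=1$. You instead differentiate the volume integral directly and bound $|\phi'(r)|$ by $\lambda \int_{B(re_1,1)} |z|^{-(\lambda+1)}\,dz$, where the same hypothesis enters as local integrability of $|z|^{-(\lambda+1)}$ in $\R^N$. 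Your argument is more elementary (no integration by parts), delivers the uniform derivative bound for all $r \geq 0$ at once (the paper handles $r>2$ by the separate crude estimate $\phi(1)-\phi(r)\leq\phi(1)\leq\phi(1)(r-1)$), and supplies an explicit pairing argument for strict monotonicity, which the paper simply declares ``clear''. Conversely, the paper's surface-integral representation is chosen because it also yields the divergent lower bound on $-\phi'$ for $\lambda\geq N-1$ used in the subsequent remark.

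One minor technical caveat: your appeal to dominated convergence for the derivative formula is not quite complete when $N-2\leq\lambda<N-1$. The natural dominating function $y\mapsto\sup_{|r-r_0|<\delta}|re_1-y|^{-(\lambda+1)}$ is the $-(\lambda+1)$ power of the distance from $y$ to a segment on the $e_1$-axis, and this is not integrable over $\mathcal B$ once $\lambda+1\geq N-1$. The repair is immediate from what you already have: your uniform bound $\int_{\mathcal B}|\partial_r[\,\cdot\,]|(r,y)\,dy\leq C$ together with Fubini gives $\phi(r+h)-\phi(r)=\int_r^{r+h}G(s)\,ds$ with $G(s)=\int_{\mathcal B}\partial_s[\,\cdot\,]\,dy$ continuous (by $L^1_{\loc}$-integrability of $|z|^{-(\lambda+1)}$ and the shrinking symmetric difference of the translated balls), whence $\phi'=G$ by the fundamental theorem of calculus. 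The Lipschitz conclusion itself is unaffected, since it follows directly from the uniform integral bound.
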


\begin{proof}
The radial symmetry and the monotonicity are clear. The differentiability could be proved using bounds on Riesz potentials as in \cite[Thm.~10.2]{LiLo}, which treats the case $\lambda=N-2$. Here we use a different method which will also be useful in the following remark. We take the differentiability in the interior of $B$ and the exterior of $\overline B$ for granted and only show that these derivatives are bounded. We compute the derivative using Gauss theorem
\begin{align*}
\nabla \phi(|x|) = \int_B -\nabla_y \frac{1}{|x-y|^\lambda}\,dy = - \int_{\partial B} \frac{\nu_y}{|x-y|^\lambda}\,d\sigma(y) = - \int_{\Sph^{N-1}} \frac{\omega'}{|x-\omega'|}\,d\omega' \,.
\end{align*}
Thus, at $x=r\omega$ with $\omega\in\Sph^{N-1}$, $r\neq 1$, the negative of the radial derivative is equal to
\begin{align*}
-\partial_r\phi(r) = \int_{\Sph^{N-1}} \frac{\omega\cdot\omega'}{|r\omega-\omega'|^\lambda}\,d\omega' = |\Sph^{N-2}| \int_{-1}^1 \frac{t\, (1-t^2)^{(N-3)/2}}{(r^2 -2rt+1)^{\lambda/2}}\,dt \,.
\end{align*}
Thus, to prove the lemma we need to prove that the integral on the right side is uniformly bounded in $r\geq 0$. We bound $r^2-2rt +1 =(r-1)^2 +2r(1-t) \geq \max\{(r-1)^2,2r(1-t)\}$ and obtain
\begin{align*}
& \int_{-1}^1 \frac{t\,(1-t^2)^{(N-3)/2}}{(r^2 -2rt+1)^{\lambda/2}}\,dt
\leq \int_0^1 \frac{t (1-t^2)^{(N-3)/2}}{(r^2-2rt+1)^{\lambda/2}} \,dt \\
& \quad \leq \min\left\{ (r-1)^{-\lambda} \int_0^1 t (1-t^2)^{(N-3)/2} \,dt\,,\ (2r)^{-\lambda/2} \int_0^1 \frac{t(1+t)^{(N-3)/2}}{(1-t)^{(\lambda-N+3)/2}}\,dt \right\}.
\end{align*}
The assumption $\lambda<N-1$ implies that $(\lambda-N+3)/2<1$ and therefore the last integral is finite. This proves the boundedness of the derivative.

This boundedness proves the bound $|\phi(r)-\phi(1)|\leq C_{N,\lambda}|r-1|$ for all $0\leq r\leq 2$. On the other hand, for $r>2$, $0\leq \phi(1)-\phi(r)\leq \phi(1)\leq \phi(1)(r-1)$. This proves the claimed bound.
\end{proof}

\begin{remark}\label{phirem}
There are constants $c_{N,\lambda}>0$ such that
$$
-\phi'(r) \geq c_{N,\lambda} \times
\begin{cases}
|r-1|^{-\lambda+N-1} & \text{if}\ N-1<\lambda<N \,,\\
|\ln|1-r|| & \text{if}\ \lambda=N-1 \,,
\end{cases}
\qquad\text{for all}\ |r-1|\leq 1/2 \,.
$$
Indeed, we use the same expression for $\phi'$ as in the previous lemma and, using $r^2-2rt+1=(r-1)^2+2r(1-t)\leq 4r(1-t)$ for $t\leq 1-(1-r)^2/(2r)$ and $r^2+2rt+1\geq 1$, we obtain
\begin{align*}
& \int_{-1}^1 \frac{t\,(1-t^2)^{(N-3)/2}}{(r^2 -2rt+1)^{\lambda/2}}\,dt = \int_{0}^1 \left( \frac{t\,(1-t^2)^{(N-3)/2}}{(r^2 -2rt+1)^{\lambda/2}} - \frac{t\,(1-t^2)^{(N-3)/2}}{(r^2 +2rt+1)^{\lambda/2}}\right)dt \\
& \quad \geq \int_0^{1-(1-r)^2/(2r)} \left( \frac{t\,(1-t^2)^{(N-3)/2}}{(4r(1-t))^{\lambda/2}} - t\,(1-t^2)^{(N-3)/2}\right)dt \\
& \quad \geq (4r)^{-\lambda/2} \int_0^{1-(1-r)^2/(2r)} \frac{t\,(1+t)^{(N-3)/2}}{(1-t)^{(\lambda-N+3)/2}}\,dt  - \int_0^1 t\,(1-t^2)^{(N-3)/2}\,dt \,.
\end{align*}
The first integral on the right side is easily seen to diverge like $r^{-\lambda+N-1}$ if $\lambda>N-1$ and like $|\ln|r-1||$ if $\lambda=N-1$, while the second integral is bounded.
\end{remark}


\section{Proof of the main result}


\subsection{Step 0}

Let $E^*\subset\R^N$ denote the ball, centered at the origin, of volume $m$ and let
$$
\Phi(|x|) := \int_{E^*} \left( |x-y|^\alpha + |x-y|^{-\lambda}\right)dy \,.
$$
Note that the right side depends only on $|x|$, which justifies the notation. Moreover, let $R$ denote the radius of $E^*$. In this preliminary subsection we collect some bounds on $\Phi$ which show, in particular, that for $m$ large enough the characteristic function $\1_{E^*}$ satisfies the Euler--Lagrange conditions for the minimization problem $E_{\alpha,\lambda}(m)$.

\begin{lemma}\label{phialphalambda}
Let $0<\lambda<N-1$. Then there are $m_{N,\alpha,\lambda}<\infty$ and $c_{N,\alpha,\lambda}>0$ such that for all $m\geq m_{N,\alpha,\lambda}$ one has
\begin{equation}
\label{eq:el}
\Phi(r) \leq \Phi(R) \quad\text{if}\ r\leq R \,,
\qquad
\Phi(r)\geq \Phi(R) \quad\text{if}\ r\geq R \,,
\end{equation}
as well as
\begin{equation}
\label{eq:phialphalambdabound}
|\Phi(r)-\Phi(R)| \geq c_{N,\alpha,\lambda} R^{N+\alpha-1} \min\{|r-R|,R\} 
\qquad\text{for all}\ r\geq 0 \,.
\end{equation}
\end{lemma}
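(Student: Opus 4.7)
Let $R$ be the radius of $E^*$ (so $R^N \cdot |\mathcal{B}| = m$, with $\mathcal{B}$ the unit ball). By the change of variables $y = Ry'$ one has the scaling decomposition
$$
\Phi(Rs) = R^{N+\alpha}\tilde\Phi_\alpha(s) + R^{N-\lambda}\tilde\Phi_{-\lambda}(s), \qquad \tilde\Phi_\mu(s) := \int_{\mathcal{B}}|s\omega - y|^\mu\,dy \text{ for any } \omega \in \Sph^{N-1}.
$$
The function $\tilde\Phi_{-\lambda}$ is exactly the $\phi$ of Lemma \ref{phi}, and $\tilde\Phi_\alpha$ is its attractive analogue. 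Setting $\tilde G_\mu(s) := \tilde\Phi_\mu(s) - \tilde\Phi_\mu(1)$ and noting $\min\{|r-R|,R\} = R\,\min\{|s-1|,1\}$, both parts of the lemma reduce, after dividing by $R^{N+\alpha}$, to proving
$$
\sgn(s-1)\bigl[\tilde G_\alpha(s) + R^{-\alpha-\lambda}\tilde G_{-\lambda}(s)\bigr] \geq c_{N,\alpha,\lambda}\,\min\{|s-1|,1\} \quad\text{for all}\ s \geq 0,
$$
provided $R$ is large enough. The attractive term $\tilde G_\alpha$ is the principal one and the repulsive piece $R^{-\alpha-\lambda}\tilde G_{-\lambda}$ must be shown to be swamped.

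For the repulsive upper bound, Lemma \ref{phi} gives $|\tilde G_{-\lambda}(s)| \leq C_{N,\lambda}|s-1|$ for all $s\geq 0$, and combining with the crude bound $|\tilde G_{-\lambda}(s)| \leq \tilde\Phi_{-\lambda}(0) < \infty$ yields $|\tilde G_{-\lambda}(s)| \leq C'_{N,\lambda}\min\{|s-1|,1\}$. This is exactly where the hypothesis $\lambda < N-1$ enters.

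For the attractive lower bound, I would run the same Gauss-theorem computation as in the proof of Lemma \ref{phi} (now with the increasing kernel $|x|^\alpha$ in place of $|x|^{-\lambda}$) to obtain
$$
\tilde\Phi_\alpha'(s) = |\Sph^{N-2}|\int_0^1 \bigl[(s^2+2st+1)^{\alpha/2} - (s^2-2st+1)^{\alpha/2}\bigr]\,t\,(1-t^2)^{(N-3)/2}\,dt,
$$
which is continuous in $s \geq 0$ and strictly positive for $s > 0$; in particular it is bounded below by some $c_1 > 0$ on $[1/2, 3/2]$. Integrating gives $\sgn(s-1)\tilde G_\alpha(s) \geq c_1|s-1|$ on this interval, and strict monotonicity of $\tilde\Phi_\alpha$ then gives $\sgn(s-1)\tilde G_\alpha(s) \geq \min\{|\tilde G_\alpha(1/2)|,|\tilde G_\alpha(3/2)|\} > 0$ outside $[1/2, 3/2]$. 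Since $\min\{|s-1|,1\}\leq 1$, adjusting constants yields $\sgn(s-1)\tilde G_\alpha(s) \geq c_{N,\alpha}\min\{|s-1|,1\}$ uniformly in $s \geq 0$.

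Finally, $\tilde G_\alpha$ is increasing and $\tilde G_{-\lambda}$ is decreasing, so their signs at $s\neq 1$ are opposite; hence
$$
\sgn(s-1)\bigl[\tilde G_\alpha(s) + R^{-\alpha-\lambda}\tilde G_{-\lambda}(s)\bigr] = |\tilde G_\alpha(s)| - R^{-\alpha-\lambda}|\tilde G_{-\lambda}(s)| \geq \bigl(c_{N,\alpha} - R^{-\alpha-\lambda}C'_{N,\lambda}\bigr)\min\{|s-1|,1\}.
$$
Choosing $m_{N,\alpha,\lambda}$ large enough that $R^{-\alpha-\lambda}C'_{N,\lambda} \leq c_{N,\alpha}/2$ completes the proof with $c_{N,\alpha,\lambda} = c_{N,\alpha}/2$. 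I do not foresee a serious obstacle; the mildly delicate point is verifying uniform positivity of $\tilde\Phi_\alpha'$ on a compact neighborhood of $s=1$, but this follows from continuity plus the elementary inequality $(s^2+2st+1)^{\alpha/2} > (s^2-2st+1)^{\alpha/2}$ for all $s,t>0$, $\alpha>0$.
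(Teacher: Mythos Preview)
Your proof is correct and follows essentially the same approach as the paper's: the scaling reduction $\Phi(Rs)=R^{N+\alpha}\bigl(\tilde\Phi_\alpha(s)+R^{-\alpha-\lambda}\tilde\Phi_{-\lambda}(s)\bigr)$, the Gauss-theorem formula for $\tilde\Phi_\alpha'$ to get a positive lower bound on $[1/2,3/2]$, the Lipschitz control on $\tilde\Phi_{-\lambda}$ from Lemma~\ref{phi}, and the monotonicity argument outside $[1/2,3/2]$ all appear in the paper in the same way. Your packaging of both conclusions into the single inequality $\sgn(s-1)\bigl[\tilde G_\alpha+R^{-\alpha-\lambda}\tilde G_{-\lambda}\bigr]\geq c\min\{|s-1|,1\}$ is a slightly cleaner presentation, but the substance is identical.
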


As shown in \cite[Lem.~4.2]{BCT}, the Euler--Lagrange conditions for $\rho$ to be a critical point of the optimization problem $E_{\alpha,\lambda}(m)$ is
\begin{align*}
(|x|^\alpha+|x|^{-\lambda})*\rho & \leq \mu \qquad \text{a.e. on}\ \{\rho=1\} \,,\\
(|x|^\alpha+|x|^{-\lambda})*\rho & = \mu \qquad \text{a.e. on}\ \{0<\rho<1\} \,,\\
(|x|^\alpha+|x|^{-\lambda})*\rho & \geq \mu \qquad \text{a.e. on}\ \{\rho=0\} \,,
\end{align*}
for some parameter $\mu>0$. For $\rho=\1_{E^*}$ these conditions simplify to $\Phi(r)\leq\mu$ a.~e.~on $\{0<r<R\}$ and $\Phi(r)\geq\mu$ a.~e.~on $\{r>R\}$, where $R$ is, as before, the radius of $E^*$. Since $\Phi$ is continuous, this can only hold with $\mu=\Phi(R)$, and Lemma \ref{phialphalambda} says that it does, indeed, for $m$ large enough.

The bound \eqref{eq:el} appears also in \cite[Lem.~5.5]{BCT}, but some details of the proof are omitted. In fact, as we show in Remark \ref{failure} below, this bound does not hold for $N-1\leq\lambda<N$. (Inequality \cite[(5.2)]{BCT} fails near $|x|=R$ if $\lambda\geq N-1$.)

\begin{proof}
We write $\phi_{-\lambda}$ for the potential defined in \eqref{eq:phi} with $\mathcal B$ being the unit ball and similarly $\phi_\alpha(|x|):= \int_{\mathcal B} |x-y|^\alpha\,dy$. Then, by scaling,
$$
\Phi(r) = R^{N+\alpha} \left( \phi_\alpha(r/R) + R^{-\alpha-\lambda} \phi_{-\lambda}(r/R) \right)
\qquad\text{for all}\ r\geq 0 \,.
$$
Clearly, $\phi_\alpha$ is monotone increasing and continuously differentiable. Therefore, there is a $c>0$, depending only on $N$ and $\alpha$, such that $\phi_\alpha'(s)\geq c$ for all $1/2\leq s\leq 3/2$. Also, by Lemma \ref{phi} there is a $C<\infty$, depending only on $N$ and $\lambda$, such that $0\leq -\phi_{-\lambda}'(s)\leq C$ for all $1/2\leq s\leq 3/2$. Thus, for all $1\leq s\leq 3/2$,
\begin{align*}
(\phi_\alpha(s)-\phi_\alpha(1)) + R^{-\alpha-\lambda} (\phi_{-\lambda}(s)-\phi_{-\lambda}(1))
& = \int_1^s \left( \phi_\alpha'(t) + R^{-\alpha-\lambda} \phi_{-\lambda}'(t)\right)dt \\
& \geq (c- R^{-\alpha-\lambda} C) (s-1) \,.
\end{align*}
Thus, if we choose $m$ so large that $R^{-\alpha-\lambda} C \leq c/2$, we can bound the right side from below by $(c/2)(s-1)$. Similarly, one shows that for all $1/2\leq s\leq 1$,
$$
(\phi_\alpha(1)-\phi_\alpha(s)) + R^{-\alpha-\lambda} (\phi_{-\lambda}(1)-\phi_{-\lambda}(1))
\geq (c/2) (1-s).
$$
Now if $s\geq 3/2$, we use $\phi_\alpha(s)\geq \phi_\alpha(3/2)$ and $\phi_{-\lambda}(s)\geq 0$ to bound
\begin{align*}
(\phi_\alpha(s)-\phi_\alpha(1)) + R^{-\alpha-\lambda} (\phi_{-\lambda}(s)-\phi_{-\lambda}(1))
& \geq \phi_\alpha(3/2)-\phi_\alpha(1) - R^{-\alpha-\lambda} \phi_{-\lambda}(1) \\
& \geq (c/2)(3/2-1) - R^{-\alpha-\lambda} \phi_{-\lambda}(1) \,.
\end{align*}
Increasing $m$ if necessary we can assume that $R^{-\alpha-\lambda}\phi_{-\lambda}(1)\leq c/8$ and then the right side is bounded from below by $c/8$. Similarly, for all $s\leq 1/2$,
$$
(\phi_\alpha(1)-\phi_\alpha(s)) + R^{-\alpha-\lambda} (\phi_{-\lambda}(1)-\phi_{-\lambda}(s)) \geq c/8 \,,
$$
provided $R^{-\alpha-\lambda}(\phi_{-\lambda}(1)-\phi_{-\lambda}(0))\leq c/8$.

After rescaling, the above inequalities become
$$
\Phi(r) - \Phi(R) \geq
\begin{cases}
\tfrac c2 R^{N+\alpha-1} (r-R) & \text{if}\ R\leq r\leq \tfrac32 R \,,\\
\tfrac c8 R^{N+\alpha} & \text{if} R > \tfrac32 R \,,
\end{cases}
$$
and
$$
\Phi(R) - \Phi(r) \geq
\begin{cases}
\tfrac c2 R^{N+\alpha-1} (R-r) & \text{if}\ \tfrac12 R\leq r\leq R \,,\\
\tfrac c8 R^{N+\alpha} & \text{if} R < \tfrac12 R \,,
\end{cases}
$$
This proves both statements in the lemma.
\end{proof}

\begin{remark}\label{failure}
We claim that if $N-1\leq\lambda<N$, then for any $m>0$ there are $r_1<R<r_2$ such that
$$
\Phi(r)> \Phi(R) >\Phi(r')
\qquad\text{for all}\ r_1<r<R<r'<r_2 \,.
$$
Consequently for $N-1\leq\lambda<N$, although the Euler--Lagrange conditions always have a solution, they are never satisfied by the characteristic function of a ball.

To prove this, we note that, since $\phi_\alpha$ is continuously differentiable, there is a $C<\infty$, depending only on $N$ and $\alpha$, such that $0\leq\phi_\alpha'(s)\leq C$ for all $1/2\leq s\leq 3/2$. On the other hand, by Remark \ref{phirem} there is a $c>0$, depending only on $N$ and $\lambda$, such that $-\phi_{-\lambda}'(s)\geq c|s-1|^{-\lambda+N-1}$. (We assume here $\lambda>N-1$, the case $\lambda=N-1$ is handled similarly.) Thus, for all $1\leq s\leq 3/2$,
\begin{align*}
(\phi_\alpha(s)-\phi_\alpha(1)) + R^{-\alpha-\lambda} (\phi_{-\lambda}(s)-\phi_{-\lambda}(1))
& = \int_1^s \left( \phi_\alpha'(t) + R^{-\alpha-\lambda} \phi_{-\lambda}'(t)\right)dt \\
& \leq \int_1^s \left( C - c R^{-\alpha-\lambda} (t-1)^{-\lambda+N-1} \right)dt \\
& = C(s-1) - \frac{c}{N-\lambda} R^{-\alpha-\lambda} (s-1)^{N-\lambda}
 \,.
\end{align*}
Clearly, for any $R$, the right side is negative in a right neighborhood of $s=1$. The same argument shows that for all $1/2\leq s\leq 1$,
$$
(\phi_\alpha(1)-\phi_\alpha(s)) + R^{-\alpha-\lambda} (\phi_{-\lambda}(1)-\phi_{-\lambda}(s)) \leq C(1-s) - \frac{c}{N-\lambda} R^{-\alpha-\lambda} (1-s)^{N-\lambda}
$$
and the right side is negative in a left neighborhood of $s=1$. This proves the claim.
\end{remark}


\subsection{Step 1}

We now begin with the main part of the proof of Theorem \ref{main}. In this step we show that for large $m$, minimizers are close to characteristic functions of balls. This closeness is expressed in the sense of the quantity $A[\cdot]$, that is, in $L^1$-norm.

\begin{proposition}\label{step1}
There is a constant $C_{N,\alpha,\lambda}<\infty$ such that, if $\rho$ is a minimizer corresponding to $E_{\alpha,\lambda}(m)$ with $m>0$, then
$$
A[\rho] \leq C_{N,\alpha,\lambda}\, m^{-(\alpha+\lambda)/N} \,.
$$
\end{proposition}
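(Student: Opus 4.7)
Let $\1_{E^*}$ be the characteristic function of the ball of measure $m$ centered at the origin, and write $R$ for its radius (so $R^N \sim m$). Since both $\mathcal{E}_{\alpha,\lambda}$ and $A[\cdot]$ are translation invariant, we may translate $\rho$ so that the infimum in the definition of $A[\rho]$ is attained at $a=0$, giving $\|\rho-\1_{E^*}\|_1 = 2m\,A[\rho]$. Testing minimality of $\rho$ against $\1_{E^*}$ rearranges as
$$
\mathcal{I}_\alpha[\rho]-\mathcal{I}_\alpha[\1_{E^*}] \leq \mathcal{I}_{-\lambda}[\1_{E^*}]-\mathcal{I}_{-\lambda}[\rho],
$$
and Theorem~\ref{christcor2} bounds the left-hand side below by $c_{N,\alpha}\,m^{2+\alpha/N}\,A[\rho]^2$.

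The entire difficulty lies in controlling the right-hand side from above by a quantity that is \emph{linear}, rather than quadratic, in $A[\rho]$. A merely quadratic upper bound would yield only $A[\rho]\leq C\,m^{-(\alpha+\lambda)/(2N)}$, which is weaker than claimed. The correct scaling is obtained by exploiting positive definiteness of the Riesz kernel $|x|^{-\lambda}$: writing $\rho = \1_{E^*}+(\rho-\1_{E^*})$ and expanding the quadratic form,
$$
\mathcal{I}_{-\lambda}[\1_{E^*}]-\mathcal{I}_{-\lambda}[\rho] = -2\,\mathcal{I}_{-\lambda}[\1_{E^*},\rho-\1_{E^*}] - \mathcal{I}_{-\lambda}[\rho-\1_{E^*}] \leq \int_{\R^N} V(|x|)\bigl(\1_{E^*}(x)-\rho(x)\bigr)\,dx,
$$
where $V(|x|):=\int_{E^*}|x-y|^{-\lambda}\,dy$ is radial, non-negative and non-increasing in $|x|$, with $V(0) = c_N\,R^{N-\lambda} \sim m^{1-\lambda/N}$. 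Because $\1_{E^*}-\rho\geq 0$ on $E^*$ and $\leq 0$ on its complement, I can discard the (non-positive) complementary contribution and bound $V$ by $V(0)$ on $E^*$ to obtain
$$
\int_{\R^N} V(|x|)\bigl(\1_{E^*}-\rho\bigr)\,dx \leq V(0)\int_{E^*}(1-\rho)\,dx = c_N\,m^{1-\lambda/N}\cdot m\,A[\rho],
$$
where I used $\int_{E^*}(1-\rho)\,dx = \tfrac12\|\rho-\1_{E^*}\|_1 = m\,A[\rho]$ (which follows from $\int\rho = m$ together with $0\leq\rho\leq 1$).

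Putting the three displays together,
$$
c_{N,\alpha}\,m^{2+\alpha/N}\,A[\rho]^2 \leq c_N\,m^{2-\lambda/N}\,A[\rho],
$$
which rearranges to $A[\rho]\leq C_{N,\alpha,\lambda}\,m^{-(\alpha+\lambda)/N}$, as claimed. The main obstacle, resolved by the positive-definiteness expansion above, is securing the linear (rather than quadratic) dependence on $A[\rho]$ in the upper bound for the repulsive deficit; this is precisely what produces the sharp exponent $(\alpha+\lambda)/N$. Note that the argument uses only $0<\lambda<N$, not the more restrictive assumption $\lambda<N-1$ needed elsewhere in the paper.
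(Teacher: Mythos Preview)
Your proof is correct and follows essentially the same strategy as the paper: apply Theorem~\ref{christcor2} to the attractive term, use minimality to compare with $\1_{E^*}$, and bound the repulsive deficit $\mathcal I_{-\lambda}[\1_{E^*}]-\mathcal I_{-\lambda}[\rho]$ linearly in $A[\rho]$ via an $L^\infty$ bound on the potential $|x|^{-\lambda}*\1_{E^*}$. The only minor technical difference is in that last step: the paper writes $\mathcal I_{-\lambda}[\1_B]-\mathcal I_{-\lambda}[\rho]=\mathcal I_{-\lambda}[\1_B-\rho,\1_B+\rho]$ and bounds this by $\tfrac12\|\1_B-\rho\|_1\bigl(\||x|^{-\lambda}*\1_B\|_\infty+\||x|^{-\lambda}*\rho\|_\infty\bigr)$, then uses the bathtub principle to control $\||x|^{-\lambda}*\rho\|_\infty$; you instead expand the square, drop $\mathcal I_{-\lambda}[\rho-\1_{E^*}]\geq 0$ by positive definiteness, and exploit the sign of $\1_{E^*}-\rho$ to discard the exterior contribution. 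Both routes arrive at the same bound $C\,m^{2-\lambda/N}A[\rho]$, and neither requires $\lambda<N-1$.
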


We emphasize that in the following we only need the much weaker fact that $A[\rho]\to 0$ as $m\to\infty$, which could also be proved, for instance, using compactness. The following proof has the advantange of giving (in principle) a computable constant and also of introducing a technique that we will use again later in Step 3.

\begin{proof}
Let $B$ be a ball of measure $m$. According to Theorem \ref{christcor2},
\begin{equation}
\label{eq:step1proof1}
c_{N,\alpha} \, m^{2+\alpha/N} A[\rho]^2 \leq \mathcal I_\alpha[\rho] - \mathcal I_\alpha[\1_{B}] \,.
\end{equation}
Further, since $\rho$ is a minimizer,
\begin{equation}
\label{eq:step1proof2}
\mathcal I_\alpha[\rho] - \mathcal I_\alpha[\1_{B}] \leq \mathcal I_{-\lambda}[\1_{B}] - \mathcal I_{-\lambda}[\rho] \,.
\end{equation}
We estimate, using the bathtub principle \cite[Thm.~1.14]{LiLo},
\begin{align*}
\mathcal I_{-\lambda}[\1_{B}]- \mathcal I_{-\lambda}[\rho]
& = \mathcal I_{-\lambda}[\1_{B}-\rho,\1_{B}+\rho] \\
& \leq \frac12 \|\1_{B}-\rho \|_1 \left( \||x|^{-\lambda} * \1_B \|_\infty + \| |x|^{-\lambda} * \rho \|_\infty \right) \\
& \leq \| \1_B - \rho \|_1 \||x|^{-\lambda} * \1_B \|_\infty \\
& \leq C_{N,\lambda} m^{1-\lambda/N} \| \1_B - \rho \|_1 \,.
\end{align*}
Combining this bound with \eqref{eq:step1proof1} and \eqref{eq:step1proof2} we obtain
$$
c_{N,\alpha} \, m^{2+\alpha/N} A[\rho]^2 \leq C_{N,\lambda} m^{1-\lambda/N} \| \1_B-\rho \|_1 \,,
$$
and therefore, taking the infimum over all $B$'s,
$$
c_{N,\alpha} \, m^{2+\alpha/N} A[\rho]^2 \leq 2 C_{N,\lambda} m^{2-\lambda/N} A[\rho] \,.
$$ 
This proves the claimed bound.
\end{proof}


\subsection{Step 2}
This is the key step of the proof! We show that for large $m$, minimizers $\rho$ are close to characteristic functions of balls not only in $L^1$ sense (as shown in Step 1), but also in the Hausdorff sense. More precisely, they differ from the characteristic function of a ball only in a shell around this ball of relative width at most of the order of $A[\rho]$. As explained after Proposition \ref{christcor1rev}, it is crucial to get precisely this bound of the shell width. 

\begin{proposition}\label{step2}
There are constants $C_{N,\alpha,\lambda}<\infty$ and $m_{N,\alpha,\lambda}<\infty$ such that any minimizer $\rho$ corresponding to $E_{\alpha,\lambda}(m)$ with $m\geq m_{N,\alpha,\lambda}$ there is a ball $B\subset\R^N$ of measure $m$ with
$$
\1_{(1-C_{N,\alpha,\lambda}A[\rho])B} \leq \rho \leq \1_{(1+C_{N,\alpha,\lambda}A[\rho])B} \,.
$$
\end{proposition}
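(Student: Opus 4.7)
My plan is to upgrade the $L^1$-closeness of Step 1 to Hausdorff closeness via a sharp Euler-Lagrange argument coupled with a dyadic bootstrap on the repulsive potential. First, fix a ball $B\subset\R^N$ of volume $m$ which, by translation, is centered at the origin with radius $R\sim m^{1/N}$ and essentially achieves the infimum in the definition of $A[\rho]$, so $\|\rho-\1_B\|_1\leq 2mA[\rho]$. Set $k(z):=|z|^\alpha+|z|^{-\lambda}$, $V:=k*\rho$, $\Phi(|x|):=(k*\1_B)(x)$, and $\Psi(x):=V(x)-\Phi(|x|)$. By the Euler-Lagrange characterization from \cite[Lem.~4.2]{BCT}, there is $\mu>0$ with $V\leq\mu$ a.e.~on $\{\rho>0\}$ and $V\geq\mu$ a.e.~on $\{\rho<1\}$.

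The structural observation is the following: if $x_+$ is a point with $|x_+|>(1+\eta)R$ and $\rho(x_+)>0$, while $x_-$ is a point with $|x_-|\leq R$ and $\rho(x_-)<1$, then from $V(x_+)\leq\mu\leq V(x_-)$ and the sharp lower bound $|\Phi(r)-\Phi(R)|\geq c R^{N+\alpha-1}\min\{|r-R|,R\}$ of Lemma \ref{phialphalambda} one derives
\[
\Psi(x_-)-\Psi(x_+)\ \geq\ c R^{N+\alpha}\eta,
\]
with an analogous inequality if the roles of interior/exterior are swapped. Since mass conservation forces the simultaneous existence of an outlier-mass set and a hole-set whenever Proposition \ref{step2} fails with $\eta\gg A[\rho]$, the proposition reduces to the uniform estimate $|\Psi(x)|\leq C R^{N+\alpha}A[\rho]$ on the set where $\rho\ne\1_B$. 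Writing $\Psi=\Psi_\alpha+\Psi_{-\lambda}$ with $\Psi_\nu:=|\cdot|^\nu*(\rho-\1_B)$, the attractive contribution is direct: after first using the growth of $|x|^\alpha$ and the E-L inequality to confine the essential support of $\rho$ to a bounded dilate of $B$, one gets $|\Psi_\alpha(x)|\leq (CR)^\alpha\|\rho-\1_B\|_1\leq C' R^{N+\alpha}A[\rho]$, exactly of the right order.

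The repulsive piece $\Psi_{-\lambda}$ is the obstacle and requires the iteration foreshadowed in the introduction. A crude estimate based only on the $L^1$-bound yields at best $|\Psi_{-\lambda}|\lesssim(mA[\rho])^{(N-\lambda)/N}$, which falls short of $R^{N+\alpha}A[\rho]$ by a power of $A[\rho]^{\lambda/N}$. To close the gap I would run a dyadic induction. Let $\eta_j=2^{-j}$ (stopping once $\eta_j\sim A[\rho]$), define shells $S_j:=\{(1-\eta_j)R\leq|x|\leq(1+\eta_j)R\}$, and escaped masses $\tau_j:=\int_{S_j^c}|\rho-\1_B|$. Initially $\tau_0\leq 2mA[\rho]$, and the goal is a one-step contraction $\tau_{j+1}\leq\tfrac{1}{2}\tau_j$ as long as $\eta_j\geq C A[\rho]$. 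At each scale, one tests the Euler-Lagrange inequality against a perturbation supported in the annular slab $S_j\setminus S_{j+1}$, combining a local version of Proposition \ref{christcor1rev} (which controls the repulsive loss linearly in the slab width) with the quadratic attractive gain of Theorem \ref{christcor2} at the same scale. Iterating through the $\sim\log_2(1/A[\rho])$ dyadic steps drives $\tau_j$ to zero precisely at $\eta\sim A[\rho]$, which is Proposition \ref{step2}.

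The principal technical obstacle is making this recursion genuinely contract across all $\log(1/A[\rho])$ scales simultaneously. At scale $\eta_j$, the repulsive defect might naively swing by $\tau_j\cdot(\eta_j R)^{-\lambda}$, which \emph{diverges} as $\eta_j\to 0$; the hypothesis $\lambda<N-1$, through the uniform bound on $\phi'_{-\lambda}$ in Lemma \ref{phi}, reins this in to a linear dependence on the shell width, which is just barely enough to keep the ratio of repulsive loss to attractive gain bounded away from $1$ uniformly in $j$. Tracking those constants carefully through all dyadic scales — and separately ruling out outliers far from $B$ (say $|x_+|\geq 2R$) by exploiting the $|x|^\alpha$ growth in the Euler-Lagrange inequality — is the labor that makes Step 2 the technical heart of the proof.
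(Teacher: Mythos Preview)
Your overall architecture --- a dyadic descent through shell widths $\eta_j=2^{-j}$ down to $\eta\sim A[\rho]$, with the repulsive potential as the obstacle and $\lambda<N-1$ as the rescue --- matches the paper's. But the mechanism you propose for each iteration step has two concrete gaps.

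First, you frame the induction as a \emph{contraction} $\tau_{j+1}\le\tfrac12\tau_j$ of the escaped mass. Over $\sim\log_2(1/A[\rho])$ steps this only yields $\tau_{j^*}\lesssim mA[\rho]^2$, not $\tau_{j^*}=0$; the proposition demands exact confinement $\1_{(1-\eta)B}\le\rho\le\1_{(1+\eta)B}$. The paper avoids this by constructing, at each scale, an explicit competitor $\rho_n$ (Lemma~\ref{competitor}) that \emph{by definition} satisfies $\1_{(1-2^{-n})E^*}\le\rho_n\le\1_{(1+2^{-n})E^*}$, and then shows $\mathcal E_{\alpha,\lambda}[\rho_n]-\mathcal E_{\alpha,\lambda}[\rho_{n-1}]<0$ unless $\rho_n=\rho_{n-1}$. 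Since $\rho$ is an exact minimizer this forces $\rho_n=\rho$ for every $n\le n_0$, which is exact containment, not a geometric decay of mass.

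Second, the tools you invoke inside the loop --- Proposition~\ref{christcor1rev} and Theorem~\ref{christcor2} --- are global quadratic estimates in $A[\rho]$ and carry no scale $\eta_j$; in particular Theorem~\ref{christcor2} plays no role in Step~2 (it is used only in Steps~1 and~3). What the iteration actually requires is a \emph{sup-norm} bound on $|\cdot|^{-\lambda}*(\rho_n-\1_{E^*})$ that exploits the thin-shell support of $\rho_n-\1_{E^*}$. This is the paper's Lemma~\ref{coulombimproved}, which gives the factor $(\eta_j R)^{\lambda/(N-1)}\|\rho_n-\1_{E^*}\|_1^{1-\lambda/(N-1)}$ and is precisely where $\lambda<N-1$ enters with the right exponent to keep the repulsive remainder beneath the linear attractive gain $\sim 2^{-n}R^{N+\alpha}\|\rho_n-\rho_{n-1}\|_1$ coming from Lemma~\ref{phialphalambda}.

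Your pointwise Euler--Lagrange framing can in fact be salvaged into a valid alternative route, but not as a mass-contraction: once one knows $\rho-\1_B$ is supported in $S_j$ \emph{exactly}, Lemma~\ref{coulombimproved} bounds $\|\Psi_{-\lambda}\|_\infty$ so that the comparison $V(x_+)\le\mu\le V(x_-)$ rules out any mass in $S_j\setminus S_{j+1}$ (for $m$ large and $\eta_{j+1}\gtrsim A[\rho]$), and one inducts on exact containment. That argument and the paper's competitor argument are two sides of the same coin; your ``$\tau_{j+1}\le\tfrac12\tau_j$'' scheme is neither.
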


For the proof of this proposition we need several preliminary lemmas. The first one is an extension of a construction in \cite[Sec.~5]{Ch}.

\begin{lemma}\label{competitor}
Let $\rho\in L^1(\R^N)$ with $0\leq\rho\leq 1$. Let $B$ be a ball of measure $\int_{\R^N}\rho\,dx$ and let $0\leq\theta\leq 1$. Then there is a $\tilde\rho\in L^1(\R^N)$ with
\begin{equation}
\label{eq:rhotilde1}
\int_{\R^N}\rho'\,dx = \int_{\R^N} \rho\,dx \,,
\end{equation}
\begin{equation}
\label{eq:rhotilde2}
\1_{(1-\theta)B} \leq \tilde \rho \leq \1_{(1+\theta)B} \,,
\end{equation}
\begin{equation}
\label{eq:rhotilde3}
\tilde \rho \geq \rho
\quad\text{in}\ B
\qquad\text{and}\qquad
\tilde \rho \leq \rho
\quad\text{in}\ \R^N\setminus B \,,
\end{equation}
\begin{equation}
\label{eq:rhotilde5}
\int_{\R^N} |\tilde\rho-\1_B| \,dx \leq \int_{\R^N} |\rho-\1_B|\,dx
\end{equation}
and
\begin{equation}
\label{eq:rhotilde4}
\int_{(1-\theta)B \cup(\R^N\setminus (1+\theta)B)} |\tilde\rho-\rho| \,dx \geq \frac12 \int_{\R^N} |\tilde\rho-\rho|\,dx \,.
\end{equation}
\end{lemma}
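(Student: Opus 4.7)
The proof is a direct construction. The governing idea is that $\tilde\rho$ should be obtained from $\rho$ by a mass-conserving rearrangement that \emph{adds} mass inside $B$ (where $\rho<1$) and \emph{removes} mass outside $B$ (where $\rho>0$), with priority given to the innermost region $(1-\theta)B$ (must be completely filled to $\tilde\rho=1$) and the farthest region $\R^N\setminus(1+\theta)B$ (must be completely emptied to $\tilde\rho=0$), so that \eqref{eq:rhotilde2} holds. The amount of mass that is forced into these two extreme regions will automatically dominate the mass moved in the intermediate annulus $(1+\theta)B\setminus(1-\theta)B$, which is what gives \eqref{eq:rhotilde4}.

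To implement this I would set $f := (1-\rho)\1_B$ and $g := \rho\,\1_{\R^N\setminus B}$, so that $0\leq f\leq\1_B$, $0\leq g\leq\1_{\R^N\setminus B}$, and $\int f\,dx=\int g\,dx$ by the mass constraint on $B$. Write $a := \int_{(1-\theta)B} f\,dx$ and $b := \int_{\R^N\setminus(1+\theta)B} g\,dx$. I then look for $\tilde\rho$ of the form $\tilde\rho = \rho + \eta - \xi$, where $0\leq\eta\leq f$ is supported in $B$ and $0\leq\xi\leq g$ is supported in $\R^N\setminus B$. The constraints \eqref{eq:rhotilde2} and the filling/emptying in the extreme regions force $\eta\geq f\,\1_{(1-\theta)B}$ and $\xi\geq g\,\1_{\R^N\setminus(1+\theta)B}$, hence $\int\eta\geq a$ and $\int\xi\geq b$. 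To also enforce $\int\eta=\int\xi$ (which is \eqref{eq:rhotilde1}), I split into cases: if $a\leq b$, set $\xi := g\,\1_{\R^N\setminus(1+\theta)B}$ and take $\eta := f\,\1_{(1-\theta)B}+h$, where $h$ is any measurable function with $0\leq h\leq f$ on the annulus $B\setminus(1-\theta)B$ and $\int h\,dx = b-a$ (such an $h$ exists since $\int_{B\setminus(1-\theta)B} f\,dx = \int f\,dx - a \geq \int g\,dx - a \geq b-a$); if $a>b$, do the symmetric construction on the outer annulus $(1+\theta)B\setminus B$.

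Once $\tilde\rho$ is defined, properties \eqref{eq:rhotilde1}, \eqref{eq:rhotilde2}, \eqref{eq:rhotilde3} are immediate from the pointwise bounds on $\eta,\xi$. For \eqref{eq:rhotilde5}, I observe that $|\tilde\rho-\1_B| = (f-\eta)\,\1_B + (g-\xi)\,\1_{\R^N\setminus B}$, whence $\|\tilde\rho-\1_B\|_1 = \int f + \int g - \int\eta - \int\xi = \|\rho-\1_B\|_1 - 2\int\eta\,dx$, and the result follows since $\int\eta\geq 0$. For \eqref{eq:rhotilde4}, I note that $|\tilde\rho-\rho|=\eta$ on $B$ and $|\tilde\rho-\rho|=\xi$ on $\R^N\setminus B$, so the total $L^1$ distance is $\int\eta+\int\xi = 2\max\{a,b\}$, while the contribution from the two extreme regions equals $a+b\geq \max\{a,b\}$.

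There is no real obstacle here: the construction is a bookkeeping exercise, and the only point that requires a moment's thought is the case split, which is dictated by whether $a\leq b$ or $a>b$. The substantive content of the lemma — the fact that one can squeeze into \eqref{eq:rhotilde4} a factor $\tfrac12$ independent of all parameters — comes out automatically from the identity $\int\eta=\int\xi$ combined with the priority assignment to the extreme regions.
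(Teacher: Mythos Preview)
Your proof is correct and follows essentially the same route as the paper's. The paper's quantities $m_i,m_o$ are your $a,b$; the case split is identical; and the key inequality for \eqref{eq:rhotilde4} is the same $a+b\geq\max\{a,b\}=\tfrac12\|\tilde\rho-\rho\|_1$. The only cosmetic difference is that where you allow an arbitrary measurable $h$ on the annulus with $\int h=|b-a|$, the paper makes the specific radial choice $h=f\,\1_{\{1-\theta\leq|x|<r_i\}}$ (respectively $\xi$ obtained from a cutoff radius $r_o$), determined by a level-set condition; neither the existence of $\tilde\rho$ nor any of the five properties depends on which admissible $h$ is chosen.
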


\begin{proof}
By translation and scale invariance, we may assume that $B$ is the ball of radius~$1$ centered at the origin. Let
$$
m_i:= \int_{\{|x|<1-\theta\}} (1-\rho)\,dx
\qquad\text{and}\qquad
m_o:= \int_{\{|x|>1+\theta\}} \rho\,dx \,.
$$

If $m_i\geq m_o$, we choose $r_o$ such that
$$
\int_{\{|x|>r_o\}} \rho\,dx = m_i
$$
and note that $r_o\leq 1+\theta$. On the other hand, $r_0\geq 1$ since, using the fact that $\rho-\1_B$ has integral zero,
$$
\int_{\{|x|>1\}} \rho\,dx = \int_{\R^N} (\rho-\1_B)_+\,dx = \int_{\R^N} (\rho-\1_B)_-\,dx = \int_B (1-\rho)\,dx \geq m_i \,.
$$
We set
$$
\tilde\rho := \rho\1_{\{|x|\leq r_o\}} + (1-\rho)\1_{\{|x|\leq 1-\theta\}} \,.
$$

If $m_i<m_o$, we choose $r_i$ such that
$$
\int_{\{|x|<r_i\}} (1-\rho)\,dx = m_o
$$
and note that $r_i\geq 1-\theta$. On the other hand, $r_i\leq 1$ by the same computation that showed $r_o\geq 1$ in the first case. We set
$$
\tilde\rho := \rho\1_{\{|x|\leq 1+\theta\}} + (1-\rho)\1_{\{|x|\leq r_i\}} \,.
$$

In both cases, the properties \eqref{eq:rhotilde1}, \eqref{eq:rhotilde2} and \eqref{eq:rhotilde3} follow immediately from the construction. Moreover, property \eqref{eq:rhotilde5} follows immediately from \eqref{eq:rhotilde3}. In order to prove \eqref{eq:rhotilde4} we set
$$
A := \{ 1-\theta \leq |x| <1+\theta\} \,,
$$
so \eqref{eq:rhotilde4} is equivalent to
$$
\int_{\R^N\setminus A} |\tilde\rho-\rho|\,dx \geq \int_A |\tilde\rho-\rho|\,dx \,.
$$
To unify the treatment of the two cases we set $\rho_i=1-\theta$ if $m_i\geq m_o$ and $\rho_o=1+\theta$ if $m_i<m_o$, so that in both cases
$$
\tilde\rho = \1_{\{|x|\leq r_i \}} + \rho \1_{\{r_i<|x|\leq r_o\}} \,.
$$
Thus,
$$
\int_A |\tilde\rho-\rho|\,dx = \int_{\{1-\theta\leq|x|< r_i\}}(1-\rho)\,dx + \int_{\{r_o\leq |x|< 1+\theta\}} \rho\,dx \,.
$$
We claim that
$$
\int_A |\tilde\rho-\rho|\,dx \leq \max\{m_i,m_o\} \,.
$$
Indeed, if $m_i\geq m_o$, then the set $\{1-\theta\leq|x|< r_i\}$ is empty and
$$
\int_{\{r_o\leq |x|< 1+\theta\}} \rho\,dx = m_i - \int_{\{|x|\geq 1+\theta\}} \rho\,dx \leq m_i = \max\{m_i,m_o\} \,,
$$
and similarly if $m_i<m_o$. On the other hand,
$$
\int_{\R^N\setminus A} |\tilde\rho-\rho|\,dx = \int_{\{|x|<1-\theta\}}(1-\rho)\,dx + \int_{\{|x|\geq 1+\theta\}} \rho\,dx = m_i + m_o \geq \max\{m_i,m_o\} \,.
$$
This proves \eqref{eq:rhotilde4} and completes the proof.
\end{proof}

The following lemma gives a bound on the $\lambda$-potential of a function $0\leq\rho\leq 1$. To motivate the bound, we note that
\begin{equation}
\label{eq:coulombstandard}
\sup_{x\in\R^N} \int_{\R^N} \frac{\rho(y)}{|x-y|^\lambda} \,dy \leq C_{N,\lambda} \left( \int_{\R^N} \rho\,dx \right)^{1-\lambda/N} \,.
\end{equation}
This bound, which we used in the proof of Proposition \ref{step1}, follows from the bathtub principle \cite[Thm.~1.14]{LiLo}. Indeed, the latter implies that to make the integral on the left side with a given $x\in\R^N$ as large as possible one takes $\rho$ to be the characteristic function of a ball centered at $x$ and of measure $\|\rho\|_1$. In particular, the radius of this ball is $c_N \|\rho\|_1^{1/N}$. We now show that the bound \eqref{eq:coulombstandard} can be improved, provided the support of $\rho$ is contained in an annular shell of a width $\theta R$ which is smaller than the radius $c_N \|\rho\|_1^{1/N}$.

\begin{lemma}\label{coulombimproved}
Let $0<\lambda<N-1$. Then there is a constant $C_{N,\lambda}<\infty$ such that for any $\rho\in L^1(\R^N)$ with $0\leq\rho\leq 1$ and any $R>0$, $0\leq\theta\leq 1$ with
$$
\supp\rho\subset\{(1-\theta)R\leq |x|\leq (1+\theta) R\}
$$
one has
$$
\sup_{x\in\R^N} \int_{\R^N} \frac{\rho(y)}{|x-y|^\lambda} \,dy \leq C_{N,\lambda} (\theta R)^{\lambda/(N-1)} \left( \int_{\R^N} \rho\,dx \right)^{1-\lambda/(N-1)}.
$$ 
\end{lemma}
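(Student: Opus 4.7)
The plan is to apply the bathtub principle to reduce to an explicit extremal configuration, then control the resulting integral through a geometric slicing bound combined with layer-cake.

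Fix $x \in \R^N$ and let $S := \{y : (1-\theta)R \leq |y| \leq (1+\theta)R\}$ denote the annular shell. By the bathtub principle \cite[Thm.~1.14]{LiLo}, among all $\rho$ with $0 \leq \rho \leq 1$, $\|\rho\|_1 = M$, and $\supp \rho \subset S$, the quantity $\int_{\R^N} \rho(y)|x-y|^{-\lambda}\,dy$ is maximized by $\rho = \1_{S \cap B(x,r)}$, where $r$ is chosen so that $|S \cap B(x,r)| = M$. It therefore suffices to bound $\int_{S \cap B(x,r)} |x-y|^{-\lambda}\,dy$ uniformly in $x$.

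The crux is the purely geometric estimate
\[
F(u) := |S \cap B(x,u)| \leq C_N\, \theta R\, u^{N-1}
\qquad \text{for all } x \in \R^N \text{ and } u > 0 .
\]
I would establish this by slicing $S$ along spheres centered at the origin: coarea gives $F(u) = \int_{(1-\theta)R}^{(1+\theta)R} \sigma_s(x,u)\,ds$, where $\sigma_s(x,u)$ is the $(N-1)$-dimensional area of the spherical cap $\{y : |y|=s,\ |y-x|\leq u\}$. Writing this cap in polar coordinates about the axis through $x$ with half-aperture $\phi_0$, the inequality $\sin\phi \leq \phi$ yields $\sigma_s(x,u) \leq C_N (s\phi_0)^{N-1}$, while the chord bound $2s\sin(\phi_0/2) \leq 2u$ forces $s\phi_0 \leq \pi u$. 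Consequently $\sigma_s(x,u) \leq C'_N u^{N-1}$ uniformly in $s$ and $x$, and integrating over the shell thickness $2\theta R$ in $s$ proves the claim. Of course one also has the trivial bound $F(u) \leq M$.

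To finish, layer-cake rewrites
\[
\int_{S \cap B(x,r)} |x-y|^{-\lambda}\,dy = r^{-\lambda} F(r) + \lambda \int_0^r u^{-\lambda-1} F(u)\,du ,
\]
and I would split the $u$-integral at the crossover $u^* := (M/(C_N \theta R))^{1/(N-1)}$ at which the two bounds on $F$ coincide: the geometric bound $F(u) \leq C_N \theta R\, u^{N-1}$ is used on $[0, u^*]$---its integrability precisely requires the assumption $\lambda < N-1$---and the trivial bound $F(u) \leq M$ on $[u^*, r]$. Both contributions come out of order $M^{1-\lambda/(N-1)} (\theta R)^{\lambda/(N-1)}$, which is the desired inequality.

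The principal obstacle is the geometric lemma $F(u) \leq C_N \theta R u^{N-1}$; the delicate point is that it must be uniform in $x$ (in particular it should absorb configurations where $x$ sits inside the inner hole of $S$ or far outside $S$), and the slicing argument handles this automatically because the cap-area bound $\sigma_s(x,u) \leq C_N u^{N-1}$ does not depend on the position of $x$. The remaining layer-cake bookkeeping is routine, with the assumption $\lambda < N-1$ entering precisely to make the small-$u$ integral converge.
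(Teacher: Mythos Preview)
Your argument is correct and gives a genuinely different route from the paper's proof. Both proofs begin with the bathtub reduction to the extremal $\rho = \1_{S\cap B(x,r)}$, but diverge from there. The paper first uses the trivial bound \eqref{eq:coulombstandard} to reduce to the regime $\theta R \lesssim \|\rho\|_1^{1/N}$, then argues informally that in this regime the curvature of the shell is negligible so that $S\cap B_r(x)$ can be replaced, up to constants, by a Euclidean box $[-L,L]^{N-1}\times[-\ell,\ell]$ with $L\sim r$ and $\ell\sim\theta R$; the integral over this box is then computed directly. Your slicing bound $|S\cap B(x,u)| \leq C_N\,\theta R\, u^{N-1}$, obtained via coarea and the spherical-cap estimate, replaces that flattening step by a clean inequality that holds uniformly in $x$, $u$, $R$, $\theta$ without any smallness assumption, and the layer-cake split at $u^*$ then extracts the right exponents. (The check $r\geq u^*$ is automatic from $M = F(r) \leq C_N\theta R\, r^{N-1}$.) What your approach buys is that no preliminary case reduction is needed and the ``curvature negligible'' heuristic is made fully rigorous; what the paper's approach buys is a more direct geometric picture of the extremal set as essentially a slab. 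Both identify $\lambda<N-1$ as the condition for the relevant integral to converge.
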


\begin{proof}
Because of the bound \eqref{eq:coulombstandard} it suffices to prove the lemma under the additional assumption
\begin{equation}
\label{eq:coulombimprovedproof}
\theta R \leq \epsilon_N \left( \int_{\R^N} \rho\,dx \right)^{1/N}
\end{equation}
for some fixed constant $\epsilon_N>0$, depending only on $N$, to be specified later. We abbreviate $A:=\{ (1-\theta)R\leq |x|\leq (1+\theta)R\}$. Fix $x\in\R^N$ and define $r>0$ by
$$
| A\cap B_r(x) | = \int_{\R^N} \rho\,dx \,.
$$
Then, by the bathtub principle \cite[Thm.~1.14]{LiLo},
$$
\int_{\R^N} \frac{\rho(y)}{|x-y|^\lambda} \,dy \leq \int_{A\cap B_r(x)} \frac{dy}{|x-y|^\lambda} \,.
$$
We are left with bounding the integral on the right side. Note that, by \eqref{eq:coulombimprovedproof},
$$
\theta R \leq \epsilon_N |A\cap B_r(x)|^{1/N} \leq \epsilon_N \min\{|A|^{1/N},|B_r(x)|^{1/N}\} \,.
$$
Since $|B_r(x)|^{1/N}=C_N r$ and $|A|^{1/N} \leq C_N' R\,\theta^{1/N}$, we may assume that
$$
\theta R \leq \epsilon_N' r
\qquad\text{and}\qquad
\theta \leq \epsilon_N''
$$
for constants $\epsilon_N',\epsilon_N''>0$ which can be chosen arbitrarily small, depending only on $N$.

A consequence of these bounds is that the curvature of the annular region $A$ is negligible and that we can, within controlled factors, replace $A\cap B_r(x)$ by a set of the form $x+([-L,L]^{N-1}\times[-\ell,\ell])$, where $L\sim r$ and $\ell \sim \theta R$. (The notation $\sim$ here means that the quotient of the two quantities is bounded from above and from below by a constant depending only on $N$.) We thus have
\begin{align*}
\int_{A\cap B_r(x)} \frac{dy}{|x-y|^\lambda}
& \lesssim \iint_{[-L,L]^{N-1}\times[-\ell,\ell]} \frac{dy'\,dy_N}{((y')^2+y_N^2)^{\lambda/2}} \\
& = L^{N-\lambda-1} \ell \iint_{[-1,1]^{N-1}\times[-1,1]} \frac{dz'\,dz_N}{((z')^2+(\ell/L)^2 z_N^2)^{\lambda/2}} \\
& \leq 2 L^{N-\lambda-1} \ell \int_{[-1,1]^{N-1}} \frac{dz'}{|z'|^{\lambda}} \,.
\end{align*}
The latter integral is finite since $\lambda<N-1$. Finally, since
$$
L^{N-\lambda-1}\ell = (L^{N-1}\ell)^{1-\lambda/(N-1)} \ell^{\lambda/(N-1)} \sim |A\cap B_r(x)|^{1-\lambda/(N-1)} (\theta R)^{\lambda/(N-1)} \,,
$$
we obtain the claimed bound.
\end{proof}

The next lemma gives a bound on the diameter of a minimizer.

\begin{lemma}\label{diameter}
For any $\alpha>0$ and $0<\lambda<N$ there is a constant $C_{N,\alpha,\lambda}<\infty$ such that any minimizer $\rho$ corresponding to $E_{\alpha,\lambda}(m)$ with $m\geq 1$ satisfies
$$
\diam\supp\rho \leq C_{N,\alpha,\lambda} m^{1/N} \,.
$$
\end{lemma}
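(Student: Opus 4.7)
The target bound $\diam\supp\rho \leq C_{N,\alpha,\lambda}\,m^{1/N}$ matches the natural length scale of the problem. My plan is a contradiction argument: if a minimizer $\rho$ had $\diam\supp\rho > K m^{1/N}$ for large $K$, I would construct a competitor $\tilde\rho$ with strictly smaller energy by relocating the far-out mass into a shell adjacent to the bulk. First, concentration: minimality against $\1_{E^*}$ gives $\mathcal I_\alpha[\rho] \leq \mathcal E[\rho] \leq \mathcal E[\1_{E^*}] \leq C_0 m^{2+\alpha/N}$ (using $\mathcal I_{-\lambda}[\rho] \geq 0$). Reading this as $\mathbb{E}|X-Y|^\alpha \leq 2 C_0 m^{\alpha/N}$ for $X, Y$ iid with law $\rho/m$, Markov's inequality together with a Fubini argument produces a center $y_c$ (WLOG $0$ by translation) with $\rho(B(0, K_0 m^{1/N})) \geq m/2$ for some $K_0 = K_0(N, \alpha)$.

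Suppose $\diam\supp\rho > K m^{1/N}$ with $K \gg K_0$. A pigeonhole over the $\sim \log_2(K/K_0)$ dyadic radial shells between scales $K_0 m^{1/N}$ and $K m^{1/N}$ selects a radius $R \in [K_0 m^{1/N}, K m^{1/N}]$ such that $\mu_f := \rho(\{|y|\geq R\}) > 0$ while the intermediate mass $\rho(\{R/4 \leq |y| < R\}) \leq \mu_f/10$ (the logarithmic factor is absorbed into the final constant). Set $\rho_{\mathrm{far}} := \rho\1_{\{|y|\geq R\}}$, $\rho' := \rho - \rho_{\mathrm{far}}$, and pick $\sigma$ supported in the annulus $A := B(0, K_1 m^{1/N}) \setminus B(0, 2K_0 m^{1/N})$, with $K_1$ so large that $|A| \geq 2m$, satisfying $0\leq \sigma \leq 1-\rho'$ and $\int \sigma = \mu_f$; this is possible since $|A\cap\{\rho'=0\}| \geq |A|-m\geq m\geq\mu_f$. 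The competitor is $\tilde\rho := \rho' + \sigma$, admissible by construction.

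Expanding $\mathcal E[\tilde\rho] - \mathcal E[\rho] = 2\mathcal I_\alpha[\rho', \sigma-\rho_{\mathrm{far}}] + 2\mathcal I_{-\lambda}[\rho', \sigma-\rho_{\mathrm{far}}] + \mathcal E[\sigma]-\mathcal E[\rho_{\mathrm{far}}]$, the attractive cross-term yields a net decrease: $\mathcal I_\alpha[\rho', \rho_{\mathrm{far}}] \geq c R^\alpha m \mu_f$ (using the bulk mass $\geq m/2$ of $\rho'$ separated from $\rho_{\mathrm{far}}$ by distance $\geq R/2$), while $\mathcal I_\alpha[\rho', \sigma] \leq C m \mu_f m^{\alpha/N}$ after the dyadic isolation suppresses the mid-range contribution $\lesssim \mu_f^2 R^\alpha$. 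The repulsive side grows by at most $C m \mu_f m^{-\lambda/N}$ via the Coulomb b
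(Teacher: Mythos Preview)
The paper does not actually prove this lemma; it cites \cite[Thm.~4.1]{FL} (for $N=3$, $\lambda=1$) and asserts that the same proof extends, omitting all details. Your competitor-based strategy is a reasonable route to such a bound, but as written the sketch contains two concrete gaps.

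First, the pigeonhole step is misstated. You claim one can select $R$ with $\rho(\{R/4\leq |y|<R\})\leq \mu_f/10$ where $\mu_f=\rho(\{|y|\geq R\})$. A dyadic pigeonhole over $J\sim\log(K/K_0)$ shells only guarantees some shell of mass $\leq m/(2J)$; nothing prevents $\mu_f$ at that scale from being far smaller than $m/J$ (indeed, if every shell mass exceeded $b_{j+1}/10$ then the tails $b_j$ would decay geometrically without ever vanishing, so no contradiction arises). Fortunately the stronger claim is unnecessary: since the main attractive gain is $\gtrsim R^\alpha m\mu_f$, it suffices that the shell mass be at most $\epsilon m$ for a small absolute $\epsilon$, which the pigeonhole does give once $K$ (hence $J$) is large. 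The ``mid-range contribution'' is then $\lesssim \epsilon\, m\mu_f R^\alpha$, not $\mu_f^2 R^\alpha$.

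Second, the standalone bound $\mathcal I_\alpha[\rho',\sigma]\leq C m\mu_f m^{\alpha/N}$ is false: $\rho'$ is supported out to radius $R$, so the portion of $\rho'$ at radii comparable to $R$ interacts with $\sigma$ at distance $\sim R$, contributing $\sim R^\alpha\|\rho'\|_1\mu_f$. What you actually need (and what is true) is a lower bound on the \emph{difference} $\mathcal I_\alpha[\rho',\rho_{\mathrm{far}}]-\mathcal I_\alpha[\rho',\sigma]$. For $x\in\supp\rho'$ with $|x|\leq R/4$ one has $\dist(x,\supp\rho_{\mathrm{far}})\geq 3R/4 > R/2 \geq \dist(x,\supp\sigma)+$, so this portion contributes nonnegatively; only the portion of $\rho'$ in $\{R/4\leq|y|<R\}$ can produce a loss, and that is precisely what the (corrected) pigeonhole controls. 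With these two fixes, and the bathtub bound $\||x|^{-\lambda}*\rho'\|_\infty\leq C m^{1-\lambda/N}$ for the repulsive cross term (your sketch is truncated there), the argument closes.
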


This bound was proved in \cite[Thm.~4.1]{FL} in the case $N=3$, $\lambda=1$. The same proof extends to the more general situation considered here. We omit the details.

Finally, we can prove the main result of this subsection.

\begin{proof}[Proof of Proposition \ref{step2}]
Let $\rho$ be a minimizer corresponding to $E_{\alpha,\lambda}(m)$. The overall strategy is to construct a competitor with the desired support properties and then deduce by minimality of $\rho$ that $\rho$ has to coincide with this competitor.

Let $B$ be a ball of measure $\int_{\R^N}\rho\,dx$ such that
\begin{equation}
\label{eq:aattained}
\|\rho - \1_B \|_1 = 2 \,\|\rho\|_1\, A[\rho] \,.
\end{equation}
(Such a ball exists, since $a\mapsto\| \rho - \1_{E^*+a}\|_1$ is continuous, tends to $2\|\rho\|_1$ at infinity and assumes somewhere a value strictly less than $2\|\rho\|_1$.) By translation invariance we may assume that $B$ is centered at the origin and we write $B=E^*$.

We construct successively a sequence of functions $\rho_n$, $n\geq -1$, as follows. We set $\rho_{-1}:=\rho$. If $\rho_{n-1}$ is already constructed for some $n\geq 0$, then $\rho_n$ is defined to be the $\tilde\rho$ from Lemma \ref{competitor} with $\rho_{n-1}$ in place of $\rho$, with $\theta=2^{-n}$ and with the given ball $E^*$.

We write, for $n\geq 0$,
\begin{align}
\label{eq:expansionenergy}
\mathcal E_{\alpha,\lambda}[\rho_n] - \mathcal E_{\alpha,\lambda}[\rho_{n-1}] 
& = \mathcal E_{\alpha,\lambda}[\rho_n-\rho_{n-1},\rho_n+\rho_{n-1}] \notag \\
& = 2\, \mathcal E_{\alpha,\lambda}[\rho_n-\rho_{n-1},\1_{E^*}] + \mathcal E_{\alpha,\lambda}[\rho_n-\rho_{n-1},\rho_n+\rho_{n-1}-2\cdot\1_{E^*}] \,.
\end{align}

We begin with the first term on the right side of \eqref{eq:expansionenergy}, which is the main term. Recall that $\Phi$ was defined in the proof of Lemma \ref{phialphalambda} and that $R$ denotes the radius of $E^*$. Using the fact that, by \eqref{eq:rhotilde1}, $\rho_n- \rho_{n-1}$ has integral zero and properties \eqref{eq:el} and \eqref{eq:rhotilde3} we obtain
$$
2\, \mathcal E_{\alpha,\lambda}[\rho_n-\rho_{n-1},\1_{E^*}] = \int_{\R^N}\! (\rho_n - \rho_{n-1})(\Phi-\Phi(R))\,dx = - \int_{\R^N} \!\left|\rho_n - \rho_{n-1} \right| \left|\Phi-\Phi(R)\right| dx.
$$
Using the bound \eqref{eq:phialphalambdabound} from Lemma \ref{phialphalambda} we conclude that, using \eqref{eq:rhotilde4},
\begin{align*}
\int_{\R^N} \left|\rho_n - \rho_{n-1} \right| \left|\phi-\phi(R)\right| dx & \geq c_{N,\alpha,\lambda} R^{N+\alpha} 2^{-n} \int_{\{ \left||x|-R\right|\geq 2^{-n} R\}} |\rho_n-\rho_{n-1}|\,dx \\
& \geq \frac12 c_{N,\alpha,\lambda} R^{N+\alpha} 2^{-n} \|\rho_n-\rho_{n-1}\|_1 \,.
\end{align*}

We now turn our attention to the second term on the right side of \eqref{eq:expansionenergy}, which is a remainder term. We decompose $\mathcal E_{\alpha,\lambda}=\mathcal I_\alpha + \mathcal I_{-\lambda}$. For the term involving $\alpha$ we use the diameter bound from Lemma \ref{diameter}. Note that by construction of $\rho_n$, the same bound holds also for $\supp\rho_n$, with a constant independent of $n$. We obtain, using \eqref{eq:rhotilde5},
\begin{align*}
\mathcal I_{\alpha}[\rho_n-\rho_{n-1},\rho_n+\rho_{n-1}-2\cdot\1_{E^*}]
& \leq C_{N,\alpha,\lambda}^\alpha m^{\alpha/N} \|\rho_n-\rho_{n-1}\|_1 \|\rho_n+\rho_{n-1} - 2\cdot\1_{E^*}\|_1 \\
& \leq C_{N,\alpha,\lambda}' R^{\alpha} \|\rho_n-\rho_{n-1}\|_1 \|\rho_{n-1}-\1_{E^*}\|_1 \,.
\end{align*}

Finally, for the term involving $\lambda$ we write
\begin{align*}
& \mathcal I_{-\lambda}[\rho_n-\rho_{n-1},\rho_n+\rho_{n-1}-2\cdot\1_{B}] \\
& = - \mathcal I_{-\lambda}[\rho_n-\rho_{n-1},\rho_n-\rho_{n-1}] + 2\,\mathcal I_{-\lambda}[\rho_n-\rho_{n-1},\rho_n-\1_{E^*}] \,.
\end{align*}
Since convolution with $|x|^{-\lambda}$ is positive semi-definite, we have
$$
- \mathcal I_{-\lambda}[\rho_n-\rho_{n-1},\rho_n-\rho_{n-1}]\leq 0 \,.
$$
Moreover,
$$
2\,\mathcal I_{-\lambda}[\rho_n-\rho_{n-1},\rho_n-\1_{E^*}] \leq \|\rho_n-\rho_{n-1}\|_1 \left\| |x|^{-\lambda} * (\rho_n-\1_{E^*}) \right\|_\infty \,.
$$
By \eqref{eq:rhotilde2}, the support of $\rho_n-\1_{E^*}$ is contained in $\{ (1-2^{-n})R\leq |x|\leq (1+2^{-n})R\}$. Thus, by Lemma \ref{coulombimproved}, we obtain
\begin{align*}
2\,\mathcal I_{-\lambda}[\rho_n-\rho_{n-1},\rho_n-\1_{E^*}] 
& \leq C_{N,\lambda} (2^{-n} R)^{\lambda/(N-1)} \|\rho_n-\rho_{n-1}\|_1 \|\rho_n-\1_{E^*}\|_1^{1-\lambda/(N-1)} \\
& \leq C_{N,\lambda} (2^{-n} R)^{\lambda/(N-1)} \|\rho_n-\rho_{n-1}\|_1 \|\rho_{n-1}-\1_{E^*}\|_1^{1-\lambda/(N-1)}.
\end{align*}
Here we used again \eqref{eq:rhotilde5}.

To summarize, we have shown that
\begin{align*}
& \mathcal E_{\alpha,\lambda}[\rho_n] - \mathcal E_{\alpha,\lambda}[\rho_{n-1}] \\
& \leq - 2^{-n} R^{N+\alpha} \|\rho_n-\rho_{n-1}\|_1 
\left( \frac12 c_{N,\alpha,\lambda} - C_{N,\alpha,\lambda}' \epsilon_n - C_{N,\lambda} R^{-\alpha-\lambda} \epsilon_n^{1-\lambda/(N-1)} \right)
\end{align*}
with
$$
\epsilon_n := 2^n R^{-N} \|\rho_{n-1}-\1_{E^*}\|_1 \,.
$$
In particular,
\begin{align}
\label{eq:step2proof}
& \mathcal E_{\alpha,\lambda}[\rho_{n_0}] - \mathcal E_{\alpha,\lambda}[\rho] = \sum_{n=0}^{n_0} \left( \mathcal E_{\alpha,\lambda}[\rho_{n}] - \mathcal E_{\alpha,\lambda}[\rho_{n-1}] \right) \notag \\
& \leq - \sum_{n=0}^{n_0} 2^{-n} R^{N+\alpha} \|\rho_n-\rho_{n-1}\|_1 \left( \frac12 c_{N,\alpha,\lambda} - C_{N,\alpha,\lambda}' \epsilon_n - C_{N,\lambda} R^{-\alpha-\lambda} \epsilon_n^{1-\lambda/(N-1)} \right).
\end{align}

According to Proposition \ref{step1} and the choice \eqref{eq:aattained} of the ball $B=E^*$ there is an $m_{N,\alpha,\lambda}<\infty$ such that for all $m\geq m_{N,\alpha,\lambda}$,
$$
\epsilon_0 = 2\, |\{ |x|<1\}|\, A[\rho] < \frac14 \frac{c_{N,\alpha,\lambda}}{C_{N,\alpha,\lambda}'} \,.
$$
To proceed, we assume first that there is a non-negative integer $n_0$ such that
$$
\epsilon_n < \frac14 \frac{c_{N,\alpha,\lambda}}{C_{N,\alpha,\lambda}'}
\qquad\text{for all}\ n=0,\ldots,n_0
\qquad\text{and}\qquad
\epsilon_{n_0+1} \geq \frac14 \frac{c_{N,\alpha,\lambda}}{C_{N,\alpha,\lambda}'} \,.
$$
Increasing $m_{N,\alpha,\lambda}$ if necessary, we may assume that for all $m\geq m_{N,\alpha,\lambda}$,
$$
C_{N,\lambda} R^{-\alpha-\lambda} \left( \frac14 \frac{c_{N,\alpha,\lambda}}{C_{N,\alpha,\lambda}'} \right)^{1-\lambda/(N-1)} \leq \frac14 c_{N,\alpha,\lambda} \,.
$$
This implies that if $m\geq m_{N,\alpha,\lambda}$ and $n\leq n_0$, then
$$
\frac12 c_{N,\alpha,\lambda} - C_{N,\alpha,\lambda}' \epsilon_n - C_{N,\lambda} R^{-\alpha-\lambda} \epsilon_n^{1-\lambda/(N-1)} > 0 \,.
$$
With this information we return to \eqref{eq:step2proof}. Since $\rho$ is a minimizer, we have
$$
\mathcal E_{\alpha,\lambda}[\rho_{n_0}] - \mathcal E_{\alpha,\lambda}[\rho] \geq 0 \,.
$$
Therefore all the terms in the sum on the right side of \eqref{eq:step2proof} have to vanish, which means that $\rho_n=\rho_{n-1}$ for all $n=0,\ldots,n_0$. Thus, $\rho_{n_0}=\rho$. By \eqref{eq:rhotilde2} we have
$$
\1_{(1-2^{-n_0})E^*} \leq \rho \leq \1_{(1+2^{-n_0})E^*} \,.
$$
The lower bound on $\epsilon_{n_0+1}$, together with \eqref{eq:aattained}, implies
$$
2^{-n_0} \leq 8 \frac{C_{N,\alpha,\lambda}'}{c_{N,\alpha,\lambda}} R^{-N} \|\rho_{n_0} - \1_{E^*}\|_1
= 16 \frac{C_{N,\alpha,\lambda}'}{c_{N,\alpha,\lambda}} |\{|x|<1\}|\, A[\rho] \,.
$$
This completes the proof of the proposition in the case where $n_0$ exists.

Otherwise, the inequality $\epsilon_n < c_{N,\alpha,\lambda}/ (4\, C_{N,\alpha,\lambda}')$ holds for all $n$, and then by the same argument as above we conclude that $\rho=\rho_n$ for all $n$. This means that $\rho=\1_{E^*}$, so the proposition holds in this case as well.
\end{proof}


\subsection{Step 3}
We now complete the proof of our main result.

\begin{proof}[Proof of Theorem \ref{main}]
We choose $B$ to be the ball from Proposition \ref{step2}. Then this proposition guarantees that the assumption of Proposition \ref{christcor1rev} is satisfied with $\theta = C_{N,\alpha,\lambda} A[\rho]$. We have $\theta\leq 1$ by Proposition \ref{step1} for $m$ large enough, depending only on $N$, $\alpha$ and $\lambda$. Proposition \ref{christcor1rev} implies that
$$
\mathcal I_{-\lambda}[\1_B] - \mathcal I_{-\lambda}[\rho] \leq C_{N,\lambda} C_{N,\alpha,\lambda}^2 m^{2-\lambda/N} A[\rho]^2 \,.
$$
If we combine this inequality with \eqref{eq:step1proof1} and \eqref{eq:step1proof2} from the proof of Proposition~\ref{step1} we obtain
$$
c_{N,\alpha} m^{2+\alpha/N} A[\rho]^2 \leq C_{N,\lambda} C_{N,\alpha,\lambda}^2 m^{2-\lambda/N} A[\rho]^2 \,.
$$
This implies
$$
A[\rho]=0
\qquad\text{if}\quad m^{(\alpha+\lambda)/N} > c_{N,\alpha}^{-1} C_{N,\lambda} C_{N,\alpha,\lambda}^2 \,,
$$
which completes the proof.
\end{proof}



\bibliographystyle{amsalpha}

\end{document}